%% Template for the submission to:
%%   The Annals of Statistics [AOS]
%%
%%%%%%%%%%%%%%%%%%%%%%%%%%%%%%%%%%%%%%%%%%%%%%
%% In this template, the places where you   %%
%% need to fill in your information are     %%
%% indicated by '???'.                      %%
%%                                          %%
%% Please do not use \input{...} to include %%
%% other tex files. Submit your LaTeX       %%
%% manuscript as one .tex document.         %%
%%%%%%%%%%%%%%%%%%%%%%%%%%%%%%%%%%%%%%%%%%%%%%

\documentclass[]{imsart}

%% Packages
\RequirePackage{amsthm,amsmath,amsfonts,amssymb}
\RequirePackage{natbib}
\newcommand{\bTheta}{\boldsymbol{\Theta}}
\newcommand{\norm}[1]{\lVert#1\rVert}

\usepackage{graphicx}
\usepackage{amssymb}
\usepackage{epstopdf}
\usepackage{multirow} 
\usepackage{tikz}
\usepackage{subfigure,mathrsfs}
\usepackage{threeparttable}
\newtheorem{theorem}{Theorem}[section]

\newtheorem{lemma}[theorem]{Lemma}
\usepackage{amsmath,amssymb,amstext,amsthm} % Lot

\newcommand{\el}{e\ell}
\newcommand{\bbE}{\mathbb E}

\startlocaldefs
%%%%%%%%%%%%%%%%%%%%%%%%%%%%%%%%%%%%%%%%%%%%%%
%%                                          %%
%% Uncomment next line to change            %%
%% the type of equation numbering           %%
%%                                          %%
%%%%%%%%%%%%%%%%%%%%%%%%%%%%%%%%%%%%%%%%%%%%%%
%\numberwithin{equation}{section}
%%%%%%%%%%%%%%%%%%%%%%%%%%%%%%%%%%%%%%%%%%%%%%
%%                                          %%
%% For Axiom, Claim, Corollary, Hypothesis, %%
%% Lemma, Theorem, Proposition              %%
%% use \theoremstyle{plain}                 %%
%%                                          %%
%%%%%%%%%%%%%%%%%%%%%%%%%%%%%%%%%%%%%%%%%%%%%%
%\theoremstyle{plain}
%\newtheorem{???}{???}
%\newtheorem*{???}{???}
%\newtheorem{???}{???}[???]
%\newtheorem{???}[???]{???}
%%%%%%%%%%%%%%%%%%%%%%%%%%%%%%%%%%%%%%%%%%%%%%
%%                                          %%
%% For Assumption, Definition, Example,     %%
%% Notation, Property, Remark, Fact         %%
%% use \theoremstyle{remark}                %%
%%                                          %%
%%%%%%%%%%%%%%%%%%%%%%%%%%%%%%%%%%%%%%%%%%%%%%
%\theoremstyle{remark}
%\newtheorem{???}{???}
%\newtheorem*{???}{???}
%\newtheorem{???}{???}[???]
%\newtheorem{???}[???]{???}
%%%%%%%%%%%%%%%%%%%%%%%%%%%%%%%%%%%%%%%%%%%%%%
%% Please put your definitions here:        %%
%%%%%%%%%%%%%%%%%%%%%%%%%%%%%%%%%%%%%%%%%%%%%%

\endlocaldefs

\begin{document}

\begin{frontmatter}
%%%%%%%%%%%%%%%%%%%%%%%%%%%%%%%%%%%%%%%%%%%%%%
%%                                          %%
%% Enter the title of your article here     %%
%%                                          %%
%%%%%%%%%%%%%%%%%%%%%%%%%%%%%%%%%%%%%%%%%%%%%%
\title{Global Consistency of Empirical Likelihood}
%\title{A sample article title with some additional note\thanksref{T1}}
\runtitle{Global consistency}
%\thankstext{T1}{A sample of additional note to the title.}

\begin{aug}
%%%%%%%%%%%%%%%%%%%%%%%%%%%%%%%%%%%%%%%%%%%%%%%
%% Only one address is permitted per author. %%
%% Only division, organization and e-mail is %%
%% included in the address.                  %%
%% Additional information can be included in %%
%% the Acknowledgments section if necessary. %%
%% ORCID can be inserted by command:         %%
%% \orcid{0000-0000-0000-0000}               %%
%%%%%%%%%%%%%%%%%%%%%%%%%%%%%%%%%%%%%%%%%%%%%%%
\author[A]{\fnms{Haodi}~\snm{Liang}},
\and
\author[A]{\fnms{Jiahua}~\snm{Chen}}
%%%%%%%%%%%%%%%%%%%%%%%%%%%%%%%%%%%%%%%%%%%%%%
%% Addresses                                %%
%%%%%%%%%%%%%%%%%%%%%%%%%%%%%%%%%%%%%%%%%%%%%%
\address[A]{Department of Statistics, University of British Columbia\printead[presep={\ }]{}}

%\address[B]{Department of Statistics, University of British Columbia\printead[presep={,\ }]{e2,e3}}
\end{aug}

\begin{abstract}
This paper develops several interesting, significant, and interconnected approaches
to non-parametric or semi-parametric statistical inferences.
The overwhelmingly favoured maximum likelihood estimator (MLE) under
parametric model is 
renowned for its strong consistency and optimality generally credited to \cite{Cramer1946}.
These properties, however, falter when the model is not regular or not completely accurate.
In addition, their applicability is limited to local maxima close to the unknown true parameter value.
One must therefore ascertain that the global maximum of the likelihood is strongly consistent under generic conditions \citep{Wald1949}.
Global consistency is also a vital research problem in the context of empirical likelihood \citep[EL]{Owen2001}. 
The EL is a ground-breaking platform for non-parametric statistical inference.
A subsequent milestone is achieved by placing estimating functions under the EL umbrella \citep{QinLawless1994}. 
The resulting profile EL function possesses many nice properties of parametric likelihood but also shares the same shortcomings.
These properties cannot be utilized unless we know the local maximum at hand is close to the unknown true parameter value. 
To overcome this obstacle, we first put forward a clean set of conditions under which the global maximum is consistent. 
We then develop a {\sc global maximum test} to ascertain if the local maximum at hand is in fact a global maximum.
Furthermore, we invent a {\sc global maximum remedy} to ensure global consistency by expanding the set of estimating functions under EL.
Our simulation experiments on many examples from the literature firmly establish that the proposed approaches work as predicted.
Our approaches also provide superior solutions to problems of their parametric counterparts investigated 
by \cite{DeHaan}, \cite{Veall}, and \cite{Jiang1999}.

\end{abstract}

\end{frontmatter}

%%%%%%%%%%%%%%%%%%%%%%%%%%%%%%%%%%%%%%%%%%%%%%
%% Please use \tableofcontents for articles %%
%% with 50 pages and more                   %%
%%%%%%%%%%%%%%%%%%%%%%%%%%%%%%%%%%%%%%%%%%%%%%
%\tableofcontents

%%%%%%%%%%%%%%%%%%%%%%%%%%%%%%%%%%%%%%%%%%%%%%
%%%% Main text entry area:
\section{Introduction}
Suppose we have a set of independent and identically distributed (iid) observations 
$x_1, \ldots, x_n$ from a distribution $F$ whose density function with respect to some
$\sigma$-finite measure is a member of $\{f(x; \theta): \theta \in \Theta\}$ for some 
parametric space $\Theta \in {\mathbb R}^q$ and positive integer $q$.
The log likelihood function of the parameter is then given by
\begin{equation}
\label{para.lik.f}
\ell_n (\theta) = \sum_{i=1}^n \log f(x_i; \theta).
\end{equation}
One example of $\{f(x; \theta): \theta \in \Theta\}$ is the Cauchy distribution family with
$q=1$ so that $\log f(x; \theta) = - \log \{ 1 + (x-\theta)^2\}$. 
One may estimate $\theta$ by its maximum likelihood estimator (MLE) $\hat \theta_n$ 
defined by
\[
\ell_n (\hat \theta_n ) = \sup_{\theta \in \Theta}  \ell_n (\theta).
\]
When $f(x; \theta)$ is smooth in $\theta$, define the score function
\[
s(x; \theta) = \frac{\partial \log f(x; \theta) }{\partial \theta}.
\]
One may instead estimate $\theta$ by $\tilde \theta_n$ that solves the score
equation
\begin{equation}
\label{score.f}
\sum_{i=1}^n s(x_i;  \theta) = 0.
\end{equation}
Under many commonly used models (but not Cauchy), $\hat \theta_n = \tilde \theta_n$.
It is generally stated that the MLE $\hat \theta_n$ is strongly consistent for 
$\theta$ \citep{Wald1949}
and $\sqrt{n} (\tilde \theta_n - \theta)$ is asymptotically normal with 0-mean
and the lowest variance \citep{Cramer1946}. 

Under the Cauchy model, however, the score equation \eqref{score.f}
has multiple solutions \citep{Small}. 
We do not generally have $\hat \theta_n = \tilde \theta_n$.
The consistency conclusion of \cite{Wald1949} is applicable to global maximum $\hat \theta_n$,
not to all $\tilde \theta_n$ because many of them are local maxima.
The asymptotic normality of \cite{Cramer1946} only applies to these $\tilde \theta_n$
that are within $o_p(1)$ distance of the true value of $\theta$,
and $\hat \theta_n$ is emphatically included.
In the Cauchy example, one can easily identify one $\tilde \theta_n$ that equals
$\hat \theta_n$. 
Yet the general problem requires careful investigation \citep{Jiang1999}.

Suppose we do not wish to postulate a parametric model for $F$.
Let $p_i = P(X = x_i)$ with the probability calculated under $F$
for $i=1, 2, \ldots, n$.
The log-empirical likelihood function  of $F$  \citep[EL]{Owen2001} is then given by
\[
\el_n(F)=\sum_{i=1}^{n} \log p_i.
\]
At the same time, consider the
situation where the parameter of interest $\theta$ is a functional of $F$
defined by
\[
{\mathbb E}_F \{ g(X;  \theta)\}= {\mathbb E} \{ g(X;  \theta): F\} = 0
\]
for some vector valued estimating function of both $X$ and $\theta$, 
where ${\mathbb E}$ is computed when $X$ has distribution $F$.
We may omit  $F$ in the notation subsequently for notational simplicity.
\cite{QinLawless1994} propose to define a profile log EL function
\begin{equation}
\label{EL.f}
\el_n(\theta) 
= \sup \{  \sum_{i=1}^{n} \log p_i:~ \sum_{i=1}^n p_i = 1,
\sum_{i=1}^n p_i g(x_i; \theta) = 0\}.
\end{equation}
We may then estimate $\theta$ by the maximum EL estimator (MELE)
defined to be 
\[
\hat \theta_n= \arg\max \el_n(\theta)
\]
 and
construct its asymptotic confidence regions via chi-square approximation
to a likelihood ratio statistic.
Like its parametric counterpart, $\hat \theta_n$ is known to be  consistent only
for the one within $n^{-1/3}$-neighbourhood of the $\theta$ \citep{QinLawless1994}.
If we choose the estimating function to be the score function of the Cauchy model,
there are multiple global maxima and only one of them is consistent.
We will deliberate this fact later. 
The existence of such examples leads to a vital question: 
when is a global maximum consistent?
If the global maximum is not consistent, does there exist an effective remedy? 
Once this problem is solved, a new task is
judging whether or not a local maximum of $\el_n(\theta)$ is in fact 
a global maximum, much like \cite{DeHaan}, \cite{Veall} and  \cite{Jiang1999}.

Our research has produced satisfactory answers to all the questions raised above.
We first sort out conditions on estimating functions under which the 
global maximum of $\el_n(\theta)$ (the MELE) is strongly consistent. 
This result significantly extends the classical results on M-estimators by \cite{Huber}.
If $\el_n(\theta)$ does not produce a consistent global maximum with
the current set of estimating functions,
 we propose a generic approach to achieving global consistency
through expanding the set of estimating functions.
In addition, we come up with a method to judge if a local maximum is in fact a global maximum.
Our method takes advantage of the special properties of the EL. 
Unlike \cite{Jiang1999}, it is free from the burden of estimating any scaling parameters.
Furthermore, it provides a much more powerful alternative solution to problems
discussed in \cite{Veall},  \cite{DeHaan}, and \cite{Jiang1999}.

\section{Global consistency of the MELE}

We first give a more detailed but still brief introduction of the profile empirical likelihood. 
Unless otherwise specified, we study the inference problems given
a set of independent and identically distributed (iid) observations
$x_1, \ldots, x_n$ from a distribution $F$. 
When it is essential to distinguish between the random variable and its observed values,
we use $X$ and $X_i$ for the generic and specific random variables.
Otherwise, we may regard $x$ and $x_i$ both as random variables and
their observed values. 
Let the parameter of interest $\theta=\theta(F)$ be the unique solution 
to $\bbE \{ g(X; \theta)\}=0$ for some $m$-dimensional bivariate function $g(x;\theta)$
and $q$ dimensional $\theta$.
The expectation here is calculated under the promise that the distribution
of $X$ is given by $F$. 
If $\theta$ is the population mean, we may define it by choosing $g(x; \theta)=x-\theta$. 
If $F$ is known to have equal mean and variance, we may choose
\[
g(x; \theta) 
=\begin{pmatrix}
x-\theta\\
x^2-\theta-\theta^2
\end{pmatrix}.
\]
In general, we study just-defined or over-defined cases when $m=q$ and $m > q$.
When the convex hull of $\{g(x_i; \theta)\}_{i=1}^{n}$ contains {\bf 0}, 
we say the convexity condition holds.
When convexity condition holds,
the solution of the optimization problem in the profile EL is given by
$$
\hat{p}_i=\frac{1}{n[1+ \hat \lambda_n^{\tau}g(x_i;\theta)]},
$$
where $\hat \lambda_n$ is the Lagrange multiplier to the
optimization problem and it satisfies
\begin{equation}
\label{Lag.eqn}
\sum_{i=1}^{n} \frac{g(x_i;\theta)}{1+ \lambda^{\tau} g(x_i;\theta)}
=0.
\end{equation}
Therefore, the profile log EL function can also be written as
$$
\el _n(\theta)=\sum_{i=1}^{n}\log(\hat{p}_i)
= - \sum_{i=1}^{n}\log[1+ \hat \lambda^{\tau}g(x_i;\theta)] - n\log n.
$$
Following \cite{QinLawless1994}, we define a profile empirical
likelihood ratio (PELR)  function
$$
W_n(\theta)
= - \sum_{i=1}^{n}\log(n \hat{p}_i)
= \sum_{i=1}^{n}\log[1+ \hat \lambda^{\tau}g(x_i;\theta)].
$$
%%%% Be aware of the sign.
Equivalently, the MELE of $\theta$ is given by
\begin{equation}
 \hat{\theta}_n = \arg\inf_{\theta \in\bTheta} W_n(\theta).
\end{equation}
We temporarily assume that the global minimum of the PELR is unique.
When the convexity condition does not hold, $\sum_{i=1}^{n}p_i g(x_i; \theta)=0$ 
does not have a solution and we let $\el(\theta) = -\infty$
and $W_n(\theta) = \infty$.
Under some moment conditions on $g(x; \theta)$, the convex condition holds
with probability approaching 1 at the true $\theta$ value as $n \to \infty$. 
We can also use the adjusted EL (AEL) of  \cite{Asokan} to avoid  the convexity issue. 
To avoid losing focus, we do not introduce the AEL here.

Is the MELE $\hat \theta_n$ consistent? \cite{QinLawless1994} give a Cramer style
positive answer. They show that $W_n(\theta) > W_n(\theta^*)$ 
uniformly for $\theta$ on the boundary of the compact set 
$\{\theta: \norm{\theta-\theta^*} \leq n^{-1/3}\}$ almost surely,
where $\theta^*$ is the true value of the parameter.
Consequently, when $W_n(\theta)$ is smooth, there must be a
local minimum within $O(n^{-1/3})$ neighbourhood of $\theta^*$.
If this one is taken as $\hat \theta_n$, then $\hat \theta_n$ is
strongly consistent and asymptotically normal.
These results are satisfactory for most purposes and nicely match
the properties of the parametric MLE in \cite{Cramer1946}. 
When $W_n(\theta)$ is always convex, there is a unique local minimum. 
Hence the MELE is this minimum and is strongly consistent and asymptotically normal. 

The PELR function, however, may have multiple global
minimum solutions as evidenced by the many examples to be given soon in later sections.
We use Cauchy model with score function
\begin{equation}
\label{Cauchy.score}
g(x; \theta) = \frac{ x - \theta}{ 1 + (x - \theta)^2}
\end{equation}
for illustration.
Given a random sample from Cauchy with $\theta^* = 0$, 
the score-based estimating equation
\begin{equation}
\label{cauchy.eq}
\sum_{i=1}^n g(x_i; \theta) =0
\end{equation}
has multiple solutions.
The profile log EL function defined by \eqref{EL.f} 
$\el_n(\hat \theta) = - n \log n = \sup_\theta  \el_n(\theta)$
whenever $\hat \theta$ solves \eqref{cauchy.eq}.
Hence, $\el_n(\theta)$ has multiple global maxima.
In addition, it is known that a positive proportion of the solutions
are outside any compact neighbourhood of $\theta^*$ even when $n \to \infty$.
Therefore, some global maxima of $\el_n(\theta)$
are not consistent for $\theta$ in general.

Under the parametric Cauchy model, one can consistently 
estimate $\theta$ by choosing among the solutions
the one that gives the highest parametric likelihood value.
The profile log EL function, however,  cannot be used this way 
in the Cauchy example based on the above discussion.
This leads to an important research problem: what properties
must the set of estimating functions have to ensure that
the global maximum (of the EL) is asymptotically unique and consistent?

We show that the global maximum of EL, or the global minimum of PELR,
is asymptotically unique and consistent under the following conditions:
\begin{itemize}
\item[(C1)] 
$\bbE[g(X;\theta)]=0$ has a unique solution $\theta^* \in \bTheta$.

\vspace{.5em}
\item[(C2)] 
The matrix $\bbE[g(X;\theta)g^{\tau}(X;\theta)]$ is finite and positive definite,
and $\bbE\{ \| g(X;\theta)\|^3\} < \infty$ for any $\theta \in \bTheta$.

\vspace{.5em}
\item[(C3)] 
For any compact set $C \subseteq \bTheta$ such that $g(x;\theta)$ 
is Lipschitz continuous in $C$: there exists a function $G$ such that 
\[
\norm{g(x;\theta') - g(x;\theta)} \leq G(x)\norm{\theta'-\theta}
\]
with $\bbE\{ G^2(X)\} < \infty$ for all $\theta, \theta' \in C$.

\vspace{.5em}
\item[(C4)] 
The parameter space $\bTheta$ is a closed subset of ${\mathbf R}^q$.

\vspace{.5em}
\item[(C5)] 
There exists a continuous scaling vector $b(\theta)$ such that
$h(X;\theta)=b^{\tau}(\theta)g(X; \theta)$ satisfies
\vspace{.3em}
\begin{enumerate}
\setlength\itemsep{1em}
\item[(i)]
$\bbE\{\limsup_{\norm{\theta}\to\infty}\norm{h(X;\theta)}^2\}<\infty.$

\item[(ii)]
$\liminf_{\norm{\theta}\to\infty}\norm{\bbE\{h(X;\theta)\}}>0.$

\item[(iii)] $\bbE\big[\lim_{r\to\infty}\Delta_h(X;r)\big]=0$ where
\[
\Delta_h(X; r)
=\sup_{\norm{\theta},\norm{\theta'}\geq r}
\norm{h(X;\theta)-h(X;\theta')}.
\]
\end{enumerate}
\end{itemize}

%%% Should we have finite third moment condition?

These conditions require the estimating functions being
smooth, having finite moments, properly identifying the parameter value, 
as well as they do not degenerate as $\|\theta\| \to \infty$.
They are non-restrictive and intuitively necessary for consistent
estimation of $\theta$.
\cite{Huber} and \cite{QinLawless1994} 
placed similar conditions.
We point out here the Cauchy example does not satisfy C5. 

We first give a few preparatory lemmas before our main theoretical result.
We assume the general setting and notations already introduced.

\begin{lemma}
\label{lemma.owen}
Suppose $Y_1, \ldots, Y_n$ are independently and identically
distributed random variables with finite second moment.
Then $\max_i \{ |Y_i| \} =  o ( n^{1/2})$.
\end{lemma}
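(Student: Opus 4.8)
The plan is to establish the stronger almost-sure statement $\max_{1\le i \le n}|Y_i| = o(n^{1/2})$ via the first Borel--Cantelli lemma, the standard device for controlling extremes of a sequence with a finite moment. The intuition is that a single summand $|Y_n|$ can be comparable to $n^{1/2}$ only finitely often, so the running maximum cannot grow faster than any fixed multiple of $n^{1/2}$.

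First I would fix $\epsilon > 0$ and control the tail probabilities. Since the $Y_i$ are identically distributed,
\[
\sum_{n=1}^{\infty} P\big(|Y_n| > \epsilon\, n^{1/2}\big)
= \sum_{n=1}^{\infty} P\big(Y_1^2 > \epsilon^2 n\big)
\le \int_0^{\infty} P\big(Y_1^2 > \epsilon^2 t\big)\, dt
= \frac{\bbE[Y_1^2]}{\epsilon^2} < \infty,
\]
where the inequality is the usual comparison of a monotone tail sum with its integral, and the final finiteness is precisely the finite-second-moment hypothesis. By Borel--Cantelli, the event $\{|Y_n| > \epsilon\, n^{1/2}\}$ occurs only finitely often almost surely, so there exists a (random, $\epsilon$-dependent) index $N$ with $|Y_n| \le \epsilon\, n^{1/2}$ for all $n \ge N$.

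Next I would translate this pointwise bound into a statement about the running maximum. Writing $C_N = \max_{i < N}|Y_i|$, which is a finite constant once $N$ is fixed, and noting $|Y_i| \le \epsilon\, i^{1/2} \le \epsilon\, n^{1/2}$ for $N \le i \le n$, we get $\max_{1 \le i \le n}|Y_i| \le \max\{C_N,\, \epsilon\, n^{1/2}\}$. For $n$ large enough that $\epsilon\, n^{1/2} > C_N$, this yields $\max_{i \le n}|Y_i| \le \epsilon\, n^{1/2}$, hence $\limsup_{n\to\infty} n^{-1/2}\max_{i \le n}|Y_i| \le \epsilon$ almost surely. Finally, applying this for each $\epsilon = 1/k$, $k = 1,2,\dots$, and intersecting the corresponding probability-one events gives $\limsup_{n\to\infty} n^{-1/2}\max_{i\le n}|Y_i| = 0$ almost surely, which is the claim.

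The argument is essentially routine, so there is no single deep obstacle; the only points demanding care are the integral comparison bounding the tail sum by $\bbE[Y_1^2]/\epsilon^2$, and the passage from the individual bound on $|Y_n|$ to the running maximum, which must be organized so that the almost-sure conclusion holds simultaneously for all $\epsilon$ through a countable intersection. If the intended reading of $o(n^{1/2})$ is convergence in probability rather than almost surely, the same tail estimate suffices immediately, since $P(\max_{i\le n}|Y_i| > \epsilon\, n^{1/2}) \le \sum_{i\le n} P(|Y_i| > \epsilon\, n^{1/2}) = n\,P(Y_1^2 > \epsilon^2 n) \to 0$ by the finite-mean tail bound.
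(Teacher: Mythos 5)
Your proof is correct: the tail-sum/integral comparison giving $\sum_n P(|Y_n|>\epsilon n^{1/2})\le \bbE[Y_1^2]/\epsilon^2$, Borel--Cantelli, the passage to the running maximum, and the countable intersection over $\epsilon=1/k$ are all handled properly, and you prove the almost-sure version the paper intends. The paper gives no proof of its own, deferring to Owen's Lemma 11.1, whose standard proof is exactly this Borel--Cantelli argument, so your approach is essentially the same.
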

This result is Lemma 11.1 in  \cite{Owen2001}.
Applying this lemma to $Y_i = g(X_i; \theta)$ under condition
(C2), we learn that $\max_i \norm{g(X_i; \theta)}= o(n^{1/3})$.
This makes $\log \{ 1+ \lambda^\tau g(x_i; \theta)\}$ well-defined
if $\lambda = O(n^{-1/3})$. Note that $o(1)$ is an order valid
in the mode of almost surely as compared to $o_p(1)$ which is
in the mode of in probability.

\begin{lemma}
\label{as.order}
Suppose C1 and C2 hold. We have $W_n(\theta^*)<n^{1/4}$ almost surely. 
\end{lemma}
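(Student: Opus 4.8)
The plan is to bound $W_n(\theta^*)$ from above by a quantity of order $n\norm{\bar g}^2$ and then argue that this is $o(n^{1/4})$ almost surely. Write $g_i = g(x_i;\theta^*)$, $\bar g = n^{-1}\sum_{i=1}^n g_i$, and $S = n^{-1}\sum_{i=1}^n g_i g_i^\tau$, and let $\hat\lambda$ be the Lagrange multiplier solving \eqref{Lag.eqn} at $\theta^*$. The starting point is the elementary inequality $\log(1+x)\le x$, valid for every $x>-1$; since the constraint $\hat p_i>0$ forces $1+\hat\lambda^\tau g_i>0$, it applies termwise and gives
\[
W_n(\theta^*)=\sum_{i=1}^n\log(1+\hat\lambda^\tau g_i)\le\sum_{i=1}^n\hat\lambda^\tau g_i=n\,\hat\lambda^\tau\bar g\le n\,\norm{\hat\lambda}\,\norm{\bar g}.
\]
Everything therefore reduces to controlling $\norm{\hat\lambda}$ and $\norm{\bar g}$ on a probability-one set. (Since $\sum_i\hat p_i=1$ with $\hat p_i>0$, the arithmetic--geometric mean inequality also gives $W_n(\theta^*)\ge 0$, so only the upper bound is at issue.)

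Next I would assemble the almost-sure orders furnished by C1--C2. The strong law of large numbers yields $\bar g\to 0$ and $S\to\Sigma:=\bbE[g(X;\theta^*)g^\tau(X;\theta^*)]$ almost surely, with $\Sigma$ positive definite by C2; hence the smallest eigenvalue of $S$ exceeds $\sigma_{\min}:=\tfrac12\lambda_{\min}(\Sigma)>0$ for all large $n$. Lemma \ref{lemma.owen}, applied under C2, gives $\max_i\norm{g_i}=o(n^{1/3})$ almost surely. The crucial refinement is an almost-sure \emph{rate} for $\bar g$: because $\bbE\norm{g(X;\theta^*)}^2<\infty$, the law of the iterated logarithm (or, alternatively, the Marcinkiewicz--Zygmund strong law under the third-moment bound in C2) gives $\norm{\bar g}=O\big(\sqrt{\log\log n/n}\big)$ almost surely, which is in particular $o(n^{-3/8})$. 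With these facts, bounding $\hat\lambda$ follows Owen's standard argument: writing $\hat\lambda=\rho u$ with $\rho\ge 0$ and $\norm{u}=1$, projecting \eqref{Lag.eqn} onto $u$ and using $1+\rho u^\tau g_i\le 1+\rho\max_i\norm{g_i}$ together with positivity of each denominator leads to
\[
\frac{\rho\,\sigma_{\min}}{1+\rho\max_i\norm{g_i}}\;\le\;u^\tau\bar g\;\le\;\norm{\bar g}.
\]
Rearranging gives $\rho\big(\sigma_{\min}-\norm{\bar g}\max_i\norm{g_i}\big)\le\norm{\bar g}$, and since $\norm{\bar g}\max_i\norm{g_i}=o(n^{-3/8})\cdot o(n^{1/3})=o(n^{-1/24})\to 0$, the bracket exceeds $\sigma_{\min}/2$ for large $n$, whence $\norm{\hat\lambda}\le 2\norm{\bar g}/\sigma_{\min}$ almost surely.

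Combining the two displays then closes the argument:
\[
W_n(\theta^*)\le n\,\norm{\hat\lambda}\,\norm{\bar g}\le\frac{2n\,\norm{\bar g}^2}{\sigma_{\min}}=O(\log\log n)=o(n^{1/4})\quad\text{a.s.},
\]
so $W_n(\theta^*)<n^{1/4}$ for all sufficiently large $n$ almost surely. I expect the target bound $n^{1/4}$ to be very generous: the argument in fact delivers $W_n(\theta^*)=O(\log\log n)$, and the looser $n^{1/4}$ is presumably stated because it is all the later proofs require. The two places that deserve care, and which I regard as the main obstacles, are (i) upgrading the strong law to a genuine almost-sure \emph{rate} $\norm{\bar g}=o(n^{-3/8})$, since the plain SLLN bound $o(1)$ is too weak; and (ii) verifying that the optimization defining $W_n(\theta^*)$ is feasible on a probability-one event, i.e.\ that the convexity condition (that $\mathbf 0$ lies in the interior of the convex hull of $\{g_i\}$) holds eventually almost surely at $\theta^*$. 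The latter is the standard ``zero in the interior of the convex hull'' fact for a mean-zero sample with positive-definite covariance, and I would invoke it here (or appeal to the adjusted EL of \cite{Asokan}) to guarantee that $\hat\lambda$ exists for all large $n$.
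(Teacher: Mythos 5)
Your proof is correct and follows essentially the same route as the paper's: both rest on the elementary bound $\log(1+x)\le x$, an almost-sure control of the Lagrange multiplier via Owen's standard argument, and the law of the iterated logarithm for $\bar g_n(\theta^*)$. You simply work out in detail the multiplier bound $\norm{\hat\lambda}=O(\norm{\bar g})$ that the paper only asserts (as $\hat\lambda_n=O(n^{-1/3})$ a.s.), and as a result you land on the sharper $W_n(\theta^*)=O(\log\log n)$, which of course still yields the stated $o(n^{1/4})$.
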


Under C1 and C2, it is well known that $W_n(\theta^*)$ has chisquare limiting
distribution as $n \to \infty$. Hence,  $W_n(\theta^*) = O_p(1)$.
Because we aim to prove global consistency in the mode of almost surely,
we need a bound in the same mode.
The proof is very simple because we only need a crude upper bound.

\begin{proof}
Recall that $W_n(\theta^*) = \sum_{i=1}^n \log \{ 1 + \hat \lambda_n^\tau g(X_i; \theta^*)\}$
with $\hat \lambda_n$ being the solution to Lagrange equation \eqref{Lag.eqn}.
When $g(X; \theta^*)$ has zero mean and finite and positive definite variance,
it is well known that $\hat \lambda_n = O_p(n^{-1/3})$. Using the same
proof of \cite{Owen2001}, we can obtain a stronger conclusion:
 $\hat \lambda_n = O(n^{-1/3})$. 
 From the elementary inequality of $\log ( 1 + x) \leq x$
 and the law of iterative logarithm \citep{Vaart}, we find
 \[
 W_n(\theta^*) \leq \hat \lambda_n^\tau \sum_{i=1}^n g(X_i; \theta^*)
 = O(n^{-1/3}) \times O( \{n \log \log n\}^{1/2}) = o (n^{1/4}).
 \]
 This completes the proof.
 \end{proof}

This intermediate result shows that with probability going to 1, 
$W_n(\theta^*)$ is bounded by a sequence going to $\infty$ at a rate of $n^{1/4}.$

\begin{lemma}
\label{lemma.asokan}
Suppose C1-C2 hold and let $\theta \neq \theta^*$ be a fixed parameter value.
Then
\[
W_n(\theta) > W_n(\theta^*)
\]
almost surely.

\end{lemma}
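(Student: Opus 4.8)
The plan is to exploit a gap in growth rates. Lemma~\ref{as.order} already pins $W_n(\theta^*)$ below the slowly growing threshold $n^{1/4}$, so it suffices to show that at a fixed $\theta\neq\theta^*$ the ratio $W_n(\theta)$ grows strictly faster than $n^{1/4}$ almost surely. The driving fact is that, by (C1), $\mu(\theta):=\bbE\{g(X;\theta)\}\neq 0$, so the sample average $\bar g_n(\theta):=n^{-1}\sum_{i=1}^n g(x_i;\theta)$ converges almost surely to a nonzero vector by the strong law of large numbers. This nonvanishing ``bias'' in the estimating equation is what prevents the empirical distribution from being cheaply reweighted to satisfy $\sum p_i g(x_i;\theta)=0$, and it should force $W_n(\theta)$ to be large.

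First I would record a variational lower bound for the empirical likelihood ratio. If the convexity condition fails at $\theta$, then $W_n(\theta)=\infty>W_n(\theta^*)$ by definition and there is nothing to prove, so assume $\mathbf 0$ lies in the convex hull of $\{g(x_i;\theta)\}$. Since $\lambda\mapsto\sum_{i=1}^n\log\{1+\lambda^\tau g(x_i;\theta)\}$ is concave with gradient equal to the left-hand side of the Lagrange equation \eqref{Lag.eqn}, its maximizer is exactly $\hat\lambda$. Hence for every feasible $\lambda$ (one with $1+\lambda^\tau g(x_i;\theta)>0$ for all $i$),
\[
W_n(\theta)\;\ge\;\sum_{i=1}^n\log\{1+\lambda^\tau g(x_i;\theta)\}.
\]

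Then I would plug in a carefully scaled trial multiplier. Set $u=\mu(\theta)/\norm{\mu(\theta)}$ and $\lambda_n=c_n u$ with $c_n=n^{-1/3}$. By Lemma~\ref{lemma.owen} applied under (C2), $\max_i\norm{g(x_i;\theta)}=o(n^{1/3})$ almost surely, so $\max_i|\lambda_n^\tau g(x_i;\theta)|\le c_n\max_i\norm{g(x_i;\theta)}\to 0$, which guarantees feasibility and, eventually, $\lambda_n^\tau g(x_i;\theta)\in(-\tfrac12,\tfrac12)$. On this range the elementary bound $\log(1+x)\ge x-x^2$ applies term by term, giving
\[
W_n(\theta)\;\ge\;c_n\sum_{i=1}^n u^\tau g(x_i;\theta)-c_n^2\sum_{i=1}^n\{u^\tau g(x_i;\theta)\}^2
= n c_n\, u^\tau\bar g_n(\theta)-c_n^2\sum_{i=1}^n\{u^\tau g(x_i;\theta)\}^2.
\]
By the strong law, $u^\tau\bar g_n(\theta)\to\norm{\mu(\theta)}>0$ and $n^{-1}\sum_{i=1}^n\{u^\tau g(x_i;\theta)\}^2\to\bbE\{(u^\tau g(X;\theta))^2\}<\infty$ (finite by (C2)), so the right-hand side equals $n^{2/3}\norm{\mu(\theta)}\{1+o(1)\}$ almost surely. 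Thus $W_n(\theta)$ grows at rate $n^{2/3}$, while $W_n(\theta^*)<n^{1/4}$ by Lemma~\ref{as.order}; since $n^{2/3}\gg n^{1/4}$, we obtain $W_n(\theta)>W_n(\theta^*)$ for all large $n$, almost surely.

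The main obstacle is the tension built into the trial multiplier: feasibility requires $c_n\max_i\norm{g(x_i;\theta)}<1$, and because $\max_i\norm{g(x_i;\theta)}$ diverges we are forced to let $c_n\to0$, which shrinks the very linear term $nc_n\norm{\mu(\theta)}$ we are trying to make large. The delicate point is choosing $c_n$ to thread this needle: fast enough to preserve feasibility, yet with $nc_n$ diverging fast enough to dominate $n^{1/4}$. The rate $\max_i\norm{g(x_i;\theta)}=o(n^{1/3})$ supplied by Lemma~\ref{lemma.owen} under the third-moment assumption in (C2) is exactly what makes $c_n=n^{-1/3}$ admissible and delivers the comfortable $n^{2/3}$ growth.
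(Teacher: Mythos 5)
Your proof is correct and follows essentially the same route as the paper: both lower-bound $W_n(\theta)$ by evaluating the concave function $\Omega_n(\lambda;\theta)$ at a deliberately scaled trial multiplier aligned with the nonzero mean of $g(X;\theta)$, then compare the resulting divergence with the $n^{1/4}$ bound on $W_n(\theta^*)$ from Lemma~\ref{as.order}. The only difference is cosmetic: the paper takes $\widehat\lambda(\theta)=n^{-3/4}\bar g_n(\theta)$ (yielding growth of order $n^{1/4}$ and needing only $\max_i\|g(x_i;\theta)\|=o(n^{1/2})$), whereas you take $n^{-1/3}\mu(\theta)/\|\mu(\theta)\|$ (yielding $n^{2/3}$ but leaning on the $o(n^{1/3})$ maximum guaranteed by the third-moment part of C2); both choices thread the feasibility/growth trade-off you correctly identify.
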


This conclusion has appeared in \cite{Asokan}.
It strongly suggests that under conditions C1-C2 the MELE  is globally consistent. 
A crucial missing piece of this lemma is the uniformity in $\theta$.
Nevertheless, this lemma serves as an important step-stone in our proof of the global consistency. 
For this reason, we provide a proof in this paper.

\begin{proof}
Let $\bar{g}_n(\theta) = n^{-1} \sum_{i=1}^{n} g(x_i;\theta)$,
and $\widehat \lambda (\theta) = n^{-3/4} \bar g_n(\theta)$.
When the convex condition holds, the PELR function
\[
W_n(\theta) =  \sum_{i=1}^{n}\log[1+\lambda^{\tau} g(x_i; \theta)]
\]
with $\lambda$ being the solution to \eqref{Lag.eqn}.
%\begin{equation}
%\label{lambda.sol}
%\sum_{i=1}^{n}\frac{g(x_i;\theta)}{1+\lambda^{\tau} g(x_i;\theta)}=0.
%\end{equation}
If the convex condition does not hold, $W_n(\theta)= \infty$. We introduce a function of $\lambda$ given $\theta$:
$$
\Omega_n(\lambda; \theta) 
= \sum_{i=1}^{n}\log \{ 1+\lambda^{\tau}g(x_i;\theta)\}.
$$
The domain of this function is
\[
\mathbb{D}_n(\theta) 
=
\big \{
\lambda: 1+\lambda^{\tau} g(x_i;\theta)> 0,~~~ \forall ~ i \in \{1, 2, \ldots, n\}
\big \}
\]
Because ${\bf 0} \in \mathbb{D}_n(\theta)$, the domain is not empty.
When the convex condition holds, there is a unique 
$\widetilde \lambda(\theta) \in \mathbb{D}_n(\theta)$ that solves \eqref{Lag.eqn}. 
Comparing their expressions, we easily find that
$\Omega_n(\widetilde \lambda(\theta) ;\theta) = W_n(\theta)$.

At the same time, the second derivative matrix
\[
\frac{\partial^2 \Omega_n(\lambda;\theta)}{\partial\lambda\partial\lambda^{\tau}}
= 
- \sum_{i=1}^{n}\frac{g(x_i;\theta)g^{\tau}(x_i;\theta)}{[1+\lambda^{\tau} g(x_i;\theta)]^2}
\]
is at least semi-negative definite, but mostly negative definite.
We harmlessly assume the negative definiteness here.
Therefore, $\tilde \lambda(\theta)$  is the unique maximum point of $\Omega(\lambda; \theta)$. 
In particular, we have
\[
W_n(\theta) = \Omega_n(\tilde \lambda(\theta); \theta)  
\geq 
\Omega_n(\widehat \lambda(\theta); \theta).
\]
for any $\theta$.  
Because $\widehat \lambda(\theta) = O(n^{-3/4})$ by design
and $\max_i \|g(x_i; \theta)\|  = o(n^{1/2})$, we have 
$\widehat \lambda(\theta) g(x_i; \theta) = o(1)$ uniformly in $i$. 
So $\Omega_n(\widehat \lambda(\theta); \theta)$ is well-defined
almost surely.
This allows us to apply Taylor's expansion to
\[
\Omega_n(\widehat \lambda(\theta); \theta)
= 
 \sum_{i=1}^{n}\log \{ 1+ \widehat \lambda^{\tau}(\theta) g(x_i;\theta)\}.
 \]
The expansion leads to $\Omega_n(\widehat \lambda(\theta); \theta) \to \infty$ at a rate of at least $n^{1/4}$ almost surely.
Therefore we also have $W_n(\theta) \to \infty$ at a rate of at least $n^{1/4}$ almost surely.
 We have omitted some trivial details here. 
 Because $W_n(\theta^*) =o(n^{1/4})$ by Lemma 
 \ref{as.order}, we must have $W_n(\theta) > W_n(\theta^*)$ almost surely. 
\end{proof}

Simplistically, we must also show that the inequality holds uniformly in $\theta$ 
outside any small neighbourhood of $\theta^*$, just like the proof of \cite{Wald1949}.
In particular and ideally, we must handle non-compact $\Theta$.
We borrow much insight from \cite{Huber}.
To enhance readability, we list many intermediate results in a lemma below. The detailed proofs are given in the appendix.

\begin{lemma}
\label{lemma2.3}
Suppose C1-C5 hold and let $\theta \neq \theta^*$ be a fixed parameter value.
For a constant $\rho > 0$, let
$B(\theta, \rho) = \{ \theta': \norm{\theta' - \theta} < \rho\}$.
Let $\widehat \lambda_n (\theta) = n^{-3/4} \bar{g}_n(\theta)$.
Then, for all sufficiently small positive value $\rho$ which may depend on
$\theta$, as $n \to \infty$,
\vspace{.5em}
\begin{itemize}
\item[(a)]
$\sup_{\theta'\in B(\theta, \rho)} \norm{\widehat{\lambda}(\theta)} = O(n^{-3/4})$.

\vspace{.5em}
\item[(b)]
$\max_{1 \leq i \leq n}
\sup_{\theta' \in B(\theta, \rho)} \norm{g(x_i; \theta')} =o(n^{1/3})$.

\vspace{.5em}
\item[(c)]
$
\sup_{\theta' \in B(\theta,\rho)} \sum_{i=1}^{n} \norm{g(x_i;\theta') }^2
=
O(n).
$
\end{itemize}

Let $W_n( \theta; \rho) = \inf_{\theta' \in B(\theta, \rho) } W_n(\theta')$
and $\overline{W}_n(r) =  \inf_{\| \theta\| > r } W_n(\theta)$.
Then, for all sufficiently small positive value $\rho$ which may depend on
$\theta$, and sufficiently large $r$,  as $n \to \infty$,
\begin{itemize}
\vspace{.5em}
\item[(d)]
$W_n( \theta; \rho) > W_n(\theta^*)$ almost surely.

\vspace{.5em}
\item[(e)]
$\overline{W}_n(r) > W_n(\theta^*)$ almost surely.
\end{itemize}
\end{lemma}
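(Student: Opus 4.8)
The plan is to read (a)--(c) as uniform upgrades of bounds we already control pointwise, and then to build (d) and (e) on the trial-multiplier device used in the proof of Lemma~\ref{lemma.asokan}: bound $W_n$ from below by $\Omega_n$ evaluated at $\widehat\lambda(\theta)=n^{-3/4}\bar g_n(\theta)$ and Taylor expand. For (a)--(c) I would reduce every supremum over $B(\theta,\rho)$ to its value at the centre $\theta$ plus a Lipschitz remainder. By (C3), $\norm{g(x_i;\theta')-g(x_i;\theta)}\le \rho\,G(x_i)$ for $\theta'\in B(\theta,\rho)$, so $\norm{\bar g_n(\theta')}\le \norm{\bar g_n(\theta)}+\rho\,n^{-1}\sum_i G(x_i)$; the strong law sends both averages to finite limits $\bbE\{g(X;\theta)\}$ and $\bbE\{G(X)\}$, giving the $O(1)$ bound that becomes (a) after the factor $n^{-3/4}$. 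The identical splitting, combined with Lemma~\ref{lemma.owen} applied to $g(\cdot;\theta)$ (third moment, (C2)) and to $G$ (second moment, (C3)), controls the maxima in (b), and the inequality $(\norm{g(x_i;\theta)}+\rho G(x_i))^2\le 2\norm{g(x_i;\theta)}^2+2\rho^2G^2(x_i)$ with the strong law for the two averages yields (c).

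For (d) I would use that, for every $\theta'$ (whether or not the convexity condition holds), $W_n(\theta')\ge \Omega_n(\widehat\lambda(\theta');\theta')$ with $\widehat\lambda(\theta')=n^{-3/4}\bar g_n(\theta')$. Parts (a) and (b) force $\widehat\lambda^\tau(\theta')g(x_i;\theta')=o(1)$ uniformly in $i$ and $\theta'\in B(\theta,\rho)$, which licenses a Taylor expansion whose leading term is $n^{1/4}\norm{\bar g_n(\theta')}^2$, while (c) makes the quadratic and cubic remainders of smaller order uniformly. Hence $W_n(\theta;\rho)\ge n^{1/4}\inf_{\theta'\in B(\theta,\rho)}\norm{\bar g_n(\theta')}^2(1+o(1))$. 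A uniform strong law (again through the Lipschitz bound) gives $\sup_{\theta'\in B(\theta,\rho)}\norm{\bar g_n(\theta')-\bbE\{g(X;\theta')\}}\to 0$, and by (C1) the continuous map $\theta'\mapsto \norm{\bbE\{g(X;\theta')\}}$ is bounded away from $0$ on the closed ball once $\rho$ is small enough that it excludes $\theta^*$. Thus $\inf\norm{\bar g_n(\theta')}^2\ge\delta>0$ eventually almost surely, so $W_n(\theta;\rho)$ grows at rate $n^{1/4}$ and overtakes $W_n(\theta^*)=o(n^{1/4})$ from Lemma~\ref{as.order}.

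The genuinely new difficulty is (e), where $\theta$ ranges over the non-compact tail $\{\norm{\theta}>r\}$ so that neither a fixed centre nor compactness is at hand; this is the step I expect to be the main obstacle. The key idea is to exploit the scaling vector of (C5) by restricting the multiplier to the one-dimensional family $\lambda=t\,b(\theta)$, which collapses the $m$-vector constraint to the scalar $h(x_i;\theta)=b^\tau(\theta)g(x_i;\theta)$ and gives
\[
W_n(\theta)\;\ge\;\sup_{t}\sum_{i=1}^{n}\log\{1+t\,h(x_i;\theta)\}.
\]
Taking $t=n^{-3/4}\bar h_n(\theta)$, where $\bar h_n(\theta)=n^{-1}\sum_i h(x_i;\theta)$, reproduces the scalar version of the previous expansion and yields $W_n(\theta)\ge n^{1/4}\bar h_n^2(\theta)(1+o(1))$ uniformly in the tail, once the tail analogues of (a)--(c) for $h$ are in place.

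The crux is then to bound $\inf_{\norm{\theta}>r}|\bar h_n(\theta)|$ away from $0$ without compactness. I would anchor at an arbitrary $\theta_0$ with $\norm{\theta_0}>r$ and control $|\bar h_n(\theta)-\bbE\{h(X;\theta)\}|$ by $n^{-1}\sum_i\Delta_h(x_i;r)$, a fixed-$\theta_0$ deviation, and $\bbE\{\Delta_h(X;r)\}$; by (C5)(iii) and monotone convergence $\bbE\{\Delta_h(X;r)\}\to 0$, so this oscillation is uniformly small for large $r$, while (C5)(ii) keeps $\inf_{\norm{\theta}>r}\norm{\bbE\{h(X;\theta)\}}$ bounded below. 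Condition (C5)(i) supplies the second moment that drives both the strong law and Lemma~\ref{lemma.owen}, giving $\max_i\sup_{\norm{\theta}>r}|h(x_i;\theta)|=o(n^{1/2})$ and hence negligible expansion remainders uniformly. Combining these, $\inf_{\norm{\theta}>r}\bar h_n^2(\theta)$ stays bounded below, so $\overline W_n(r)$ again grows like $n^{1/4}$ and exceeds $W_n(\theta^*)$. The delicate point throughout (e) is marrying the equicontinuity-at-infinity encoded in $\Delta_h$ with the identification-at-infinity of (C5)(ii) to produce a bound on $|\bar h_n(\theta)|$ that is genuinely uniform over the unbounded tail.
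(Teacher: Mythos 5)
Your proposal is correct and follows essentially the same route as the paper's proof: the Lipschitz splitting about the centre of $B(\theta,\rho)$ for (a)--(c), the trial multiplier $\widehat\lambda(\theta')=n^{-3/4}\bar g_n(\theta')$ with a Taylor expansion whose leading term is $n^{1/4}\norm{\bar g_n(\theta')}^2$ for (d), and the anchor point $\theta_0$ with oscillation controlled by $\Delta_h(\cdot;r)$ together with C5(ii) for (e). The only (minor, and arguably cleaner) deviation is that in (e) you restrict the multiplier to the one-dimensional family $\lambda=t\,b(\theta)$ and work with the scalar $h=b^\tau g$, whereas the paper normalizes ``without loss of generality $b(\theta)=1$'' and runs the same expansion on the vector $\bar g_n$.
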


Now we state the main theorem. 
\begin{theorem}
\label{global.consistency}
Assume the general setting and notation introduced previously.
Under conditions C1-C5, $\hat{\theta}_n {~\to~ } \theta^*$
almost surely as $n \to \infty$. 
\end{theorem}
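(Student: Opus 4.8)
The plan is to follow the classical Wald strategy adapted to the PELR. Since $\hat{\theta}_n$ minimizes $W_n$ over all of $\bTheta$, and since $\theta^* \in \bTheta$ gives $\inf_{\theta \in \bTheta} W_n(\theta) \leq W_n(\theta^*)$, proving $\hat{\theta}_n \to \theta^*$ almost surely reduces to showing that for every fixed $\epsilon > 0$ the infimum of $W_n$ over the set $\bTheta_\epsilon := \{\theta \in \bTheta : \norm{\theta - \theta^*} \geq \epsilon\}$ eventually strictly exceeds $W_n(\theta^*)$ almost surely; this forces $\hat{\theta}_n \notin \bTheta_\epsilon$ for all large $n$, i.e. $\norm{\hat{\theta}_n - \theta^*} < \epsilon$. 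I would first reduce the full convergence statement to this fixed-$\epsilon$ claim: letting $\epsilon = 1/k$ range over $k \in \mathbb{N}$ and taking a countable union of null events shows that $\{\hat{\theta}_n \not\to \theta^*\}$ has probability zero, so for each fixed $\epsilon$ only finitely many almost-sure inequalities must be combined. Note $W_n(\theta^*) < n^{1/4} < \infty$ almost surely by Lemma~\ref{as.order}, so the target $W_n(\theta^*)$ is a finite benchmark.

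The core of the argument is the uniform separation $\inf_{\theta \in \bTheta_\epsilon} W_n(\theta) > W_n(\theta^*)$ almost surely. I would split $\bTheta_\epsilon$ into a bounded piece $K := \bTheta_\epsilon \cap \{\norm{\theta} \leq r\}$ and an unbounded tail $\{\norm{\theta} > r\}$, for a fixed large radius $r$. On the tail, Lemma~\ref{lemma2.3}(e) directly supplies $\overline{W}_n(r) > W_n(\theta^*)$ almost surely for all sufficiently large $r$; this is precisely where conditions C4 and the non-degeneracy-at-infinity condition C5 earn their keep, since a non-compact $\bTheta$ could not otherwise be controlled. The set $K$ is closed by C4 and bounded, hence compact. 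For each $\theta \in K$, Lemma~\ref{lemma2.3}(d) provides a radius $\rho_\theta > 0$ with $\inf_{\theta' \in B(\theta, \rho_\theta)} W_n(\theta') > W_n(\theta^*)$ almost surely. The open balls $\{B(\theta, \rho_\theta)\}_{\theta \in K}$ cover $K$, so by the Heine--Borel theorem finitely many of them, centred at $\theta_1, \dots, \theta_N$, already cover $K$.

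With a finite subcover in hand, the intersection of the $N$ almost-sure events from Lemma~\ref{lemma2.3}(d) is again almost sure, and on that event $\inf_{\theta \in K} W_n(\theta) \geq \min_{1 \leq j \leq N} \inf_{\theta' \in B(\theta_j, \rho_{\theta_j})} W_n(\theta') > W_n(\theta^*)$. Combining the bounded and unbounded pieces yields $\inf_{\theta \in \bTheta_\epsilon} W_n(\theta) > W_n(\theta^*)$ almost surely, which closes the fixed-$\epsilon$ claim and hence the theorem. I expect the main obstacle to be exactly the passage from pointwise-in-$\theta$ separation to a \emph{uniform} one: the almost-sure inequalities of Lemmas~\ref{lemma.asokan} and~\ref{lemma2.3}(d) each concern a single $\theta$ or a single ball, and the only reason a finite collection of them can be intersected while preserving the almost-sure conclusion is the compactness of $K$ together with the local uniformity packaged into part (d). The genuinely hard analysis — the ball-wise lower bound (d) and the tail bound (e), which rest on the moment and Lipschitz estimates (a)--(c) — has been delegated to Lemma~\ref{lemma2.3}, so the theorem itself becomes this compactness-and-assembly step.
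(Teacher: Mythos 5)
Your proposal is correct and follows essentially the same route as the paper's own proof: split $\{\theta:\norm{\theta-\theta^*}\geq\epsilon\}$ into a compact bounded part handled by a finite subcover of the balls from Lemma~\ref{lemma2.3}(d) and an unbounded tail handled by Lemma~\ref{lemma2.3}(e), then compare both against the benchmark $W_n(\theta^*)$. Your added remark on taking $\epsilon = 1/k$ over a countable family to pass from fixed-$\epsilon$ separation to full almost-sure convergence is a detail the paper elides, and you also cite parts (d) and (e) in the roles the paper evidently intends (its in-text references to (d) and (c) appear to be typos for (e) and (d)).
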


\begin{proof}
By Lemma \ref{lemma2.3} (d), there is a sufficiently large $r > 0$ such that 
$\overline{W}_n(r) > W_n(\theta^*)$ almost surely.

For any $\epsilon>0$, let
\begin{equation}
\label{Theta.r.epsilon}
\overline{\Theta}_{r, \epsilon} 
=
\{ \theta: \norm{\theta - \theta^*} \geq \epsilon, \norm{\theta} \leq r \}.
\end{equation}
By Lemma \ref{lemma2.3}(c),  for any $\theta \in \overline{\Theta}_{r, \epsilon} $, 
there is an associated sufficiently small $\rho_{\theta}$ such that 
\[
W_n(\theta; \rho_{\theta}) > W_n(\theta^*)
\]
almost surely.
Under condition C4, $\overline{\Theta}_{r, \epsilon}$ is compact. 
Hence, there exist a finite number of $\theta_1, \ldots, \theta_K$
such that 
\[
\overline{\Theta}_{r, \epsilon} \subset \cup_{j=1}^{K} B(\theta_j, \rho_{\theta_j}).
\]
Therefore, 
\[
\inf_{\theta \not \in B(\theta^*, \epsilon)} W_n(\theta)
\geq
\min \Big \{ 
\overline{W}_n(r), 
\min_{1 \leq j \leq K} \inf_{\theta \in B(\theta_j, \rho_{\theta_j}) } W_n(\theta) 
\Big \}
> W_n(\theta^*)
\]
almost surely.
This implies the global minimum of $W_n(\theta)$, 
or the global maximum of $\el(\theta)$
is within the $\epsilon$-neighborhood of $\theta^*$.
Because $\epsilon$ can be chosen arbitrarily small, the
MELE is globally consistent.
\end{proof}

\section{Test for Global Maximum}
In most applications, we trust that the consistency conclusion of \cite{QinLawless1994}
is applicable to the ``MELE'' at hand without due diligence.
If in doubt, one should first confirm that C1-C5 hold 
and subsequently use the global maximum of the likelihood as the MELE for statistical analysis.
This approach, however, becomes practical only if we can determine whether 
a local maximum at hand is in fact global.
The generic problem of determining whether a local maximum is
global is a well-known challenging problem, or
one would be instantly famous with a powerful general solution.
For special cases, some approaches have been
discussed in the literature such as \cite{DeHaan}, \cite{Veall}, 
and \cite{Jiang1999}. 

In this paper, we utilize the global consistency conclusion
and give a powerful solution.
To better appreciate the novelty of the proposed approach, we first
briefly describe the methods of \cite{Veall} and \cite{Jiang1999}.

Suppose one wishes to maximize a function $\ell(\theta)$ over a bounded
region $\Theta$, and $\hat \theta$ is the current best solution. 
\cite{Veall} propose to draw an iid sample $\theta_1, \ldots, \theta_n$
for some $n$ uniformly from $\Theta$. 
Compute $L_i = \ell(\theta_i)$ for $i=1, \ldots, n$.
Making use of extreme value theory of \cite{DeHaan},
one can construct a level $p$ confidence interval for $\sup_\Theta \ell(\theta)$
in the form of $(L_{(n)}, L^p)$ with
\begin{equation}
\label{DeHaan.ci}
L^p = L_{(n)} + (L_{(n)} - L_{(n-1)})/\{ p^{-2/q} -1\}
\end{equation}
where $L_{(n)} $ and $L_{(n-1)}$ are two largest order statistics of $L_i: i=1, \ldots, n$
and $q$ is the dimension of $\theta$.
One rejects the hypothesis that $\hat \theta$ is the global maximum if
$\ell(\hat \theta) < L^p$.
Setting $n=500,$ \cite{Veall} experimented this method on 5 examples and showed
such a test has reasonably close to $100p\%$ rejection rate when $\hat \theta$ is a global
maximum, and respectably high reject rate when it is not.
As time goes, both computing power and statistical theory of this approach 
become outdated.
Yet this approach remains intelligently a good reference point.

\cite{Jiang1999} mainly targets the global maximum problem under a parametric model.
Suppose we have an iid sample from a distribution with a parametric density $f(x;\theta)$. 
Let $\hat{\theta}_n$ be a local maximum of the log parametric likelihood $\ell_n(\theta)$.
Let
\begin{equation}
\label{Bartlett}
\varphi(x;\theta) =
\Big(\frac{\partial}{\partial\theta}\log f(x;\theta)\Big)^2
+\frac{\partial^2}{\partial\theta^2}\log f(x;\theta).
\end{equation}
When the parametric model is regular and correctly specified,
by the Bartlett identity \citep{Bartlett1953a,Bartlett1953b}, we have
$$
\bbE \{ \varphi(X; \theta^*) \} = 0.
$$
Hence if $\hat{\theta}$ is a consistent estimator, one anticipates
\[
\overline{\varphi}_n = \frac{1}{n}\sum_{i=1}^{n}\varphi(x_i; \hat{\theta}) \approx 0.
\]
Consequently, \cite{Jiang1999} proposes to reject the hypothesis that 
$\hat{\theta}$ is a global maximum (or a consistent estimator) when
the norm of $\overline{\varphi}_n$ exceeds some threshold value,
which is usually chosen based on the limiting distribution of $\overline{\varphi}_n$.
Clearly, their approach also has the consistency and global maximum 
problems linked together. 
It requires a proper norm and a satisfactory scaling factor for $\overline{\varphi}_n$.
Both are conceptually simple but can lead to many tedious issues
in implementation. Its effectiveness can suffer from the risk
of model misspecification.

In the context of EL, we invent an elegant alternative.
Suppose conditions C1-C5 hold. 
If $\hat \theta_n$ is a global maximum, then
by Theorem \ref{global.consistency}, $\hat \theta_n$ is consistent.
By \cite{QinLawless1994}, we anticipate that when $q > m$,
\[
2W_n(\hat \theta_n) \to \chi^2_{q-m}.
\]
If $\hat \theta_n$ is not a global maximum, then $2W_n(\hat \theta_n)$
is stochastically inflated. This observation leads to a simple approach. 

\vspace{1em}
\noindent
{\sc Global maximum test}.
{\it
Given a local maximum $\tilde{\theta}_n$ of $\el(\theta)$, 
we reject the claim that it is a global maximum when
\[
p\mbox{-value} = P \{ \chi^2_{q-m} >  2W_n(\tilde \theta_n)  \} < \alpha
\]
for some pre-selected $\alpha \in (0, 1)$, where $\chi^2_{q-m}$ stands
for a chi-square random variable with $q-m$ degrees of freedom.
}

\vspace{1em}
\noindent
{\sc Remark}. Technically, we have not proved that the limiting distribution of $W_n(\hat \theta_n)$,
when $\hat \theta_n$ is the global maximum, is the chisquare given by
\cite{QinLawless1994}. However, our test is always valid and
conservative.

In applications, the larger the value of $2W_n(\tilde \theta_n)$ is,
the more effort should be made to search for a potential and yet to be
located global maximum of the likelihood. 
The above $p$-value provides a standardized metric for judgement.
Although this procedure resembles the statistical significance test,
we do not actually test for any hypotheses about the model.
Hence, we need not adopt the convention $\alpha = 0.05$.
One may want to be much more liberal in searching for ``true''
global maximum.

Note that when the model is just defined, that is, $m=q$, we often have at least
one solution to the estimating function. All solutions are global maximum
of the profile EL $\el(\theta)$. Hence, the problem of identifying the
global maximum does not exist, though not all of them are consistent.

\section{Recipe for global consistency}
As we have already pointed out, in some cases, a global maximum
is not necessarily a consistency estimator.
This is particularly true in the EL context when $m=q$, 
the number of equations is the same as the number of parameters.
Consider the Cauchy distribution example of \cite{Small}.
Suppose $\tilde \theta_1, \tilde \theta_2, \ldots, \tilde \theta_K$ 
are solutions to $\sum  g(x_i; \theta) = 0$.
Given $\theta = \tilde \theta_k$, the solution to the profile EL is given by
$p_i = 1/n$. Therefore, we have
\[
\el(\tilde \theta_k) = \sup_\theta \el(\theta).
\]
Namely, all of them are global maximum of the profile EL and
we cannot choose a consistent estimator accordingly.

Why does Theorem \ref{global.consistency} not apply? 
The key is the failure of condition C5, which requires the norm of the estimating function
(vector valued in general) does not shrink to ${\bf 0}$ while the variance remains bounded
as $\|\theta\| \to \infty$. 
This is apparently not true for the Cauchy score function given in \eqref{Cauchy.score}.

Under the parametric Cauchy model, one would not completely rely on estimating
functions for statistical inference. The parametric likelihood can be reliably used to
choose a consistent estimator among 
$\tilde \theta_1, \tilde \theta_2, \ldots, \tilde \theta_K$.
In addition, we need not exhaust all solutions of the estimating equation, but focus
on those close to the sample median.

It is widely accepted that the likelihood function contains all information in the data
needed for statistical inference if the model assumption is solid. The EL approach
we adopted gives up all parametric model information except for an estimation function.
Suppose in addition to this estimating function, we also know that the population
distribution is symmetric. We can then feed the EL with an expanded
set of estimating functions:
\begin{equation}
\label{Cauchy.eq2}
g(x; \theta) = \left (
\begin{array}{c}
 (x - \theta)/\{1+ (x- \theta)^2\} \\
 (x - \theta)^{1/3}
 \end{array}
 \right ).
\end{equation}
 The second entrance of the above estimating function validates C5 so that the global
 maximum of the new profile EL function is consistent.
According to \cite{QinLawless1994}, the MELE is an optimal estimator
of $\theta$ if $g$ is the score function.
Adding unbiased estimating functions
to the score function does not lead to more efficient estimator.
However, it helps in achieving global consistency and dispeling nuisance
local maxima.

 \vspace{1em}
 \noindent
 {\sc Global consistency remedy}.
 {\it
 When the profile log EL function $\el(\theta)$ based on unbiased
 estimating function $g_1(x; \theta)$ does not satisfy any of C1-C5,
 search for additional unbiased estimating function $g_2(x; \theta)$ so that
\[
g(x; \theta) = \left ( 
\begin{array}{c}
g_1(x; \theta)\\
g_2(x; \theta)
\end{array}
\right )
\]
satisfies condition C1-C5.}
 
We now apply this remedy to several well discussed problems in the literature.

\subsection{Non-linear Regression}

Non-linear methods are popular in econometrics. 
We often choose to estimate the model parameter
as the maximum of some objective function \citep{Veall}.
Unlike linear models, the solution to the optimization problem
usually does not have a closed/analytical form.
Econometricians generally employ a numerical algorithm
to search for parameter values at which the gradient of the objective function is 0.

When the objective function has multiple local maxima, the resulting estimator can be inconsistent. 
Because of this, \cite{DeHaan} proposes to test whether or not a given local maximum 
of the objective function is global maximum based on asymptotic extreme value theory. 
As seen earlier, his approach requires evaluating the objective function at a large number
of parameter values. 
These extra computational cost of \cite{DeHaan} can hence be prohibitive.
Designing a proper sampling scheme when the parameter space is of high dimension can also be an obstacle to its generalization.
In comparison, our approach is simpler and much more powerful.

We illustrate the proposed method through a non-linear regression example of
\cite{Veall}.
%which is also discussed in \cite{Jiang1999}. 
Suppose $n$ paired independent observations 
$(x_1, y_1), \ldots, (x_n, y_n)$ are described by the following regression model
\begin{equation}
\label{regr.model}
y_i = \theta + \theta^2 x_i+\epsilon_i,
\end{equation}
with real valued parameter $\theta$, and error terms
$\epsilon_i$ being iid with $N(0, \sigma^2)$ distribution.
 
It is natural to estimate $\theta$ through minimizing the sum of squared errors.
The sum of squares objective function is defined to be
$$ 
S_n(\theta)=\sum_{i=1}^{n}(y_i - \theta - \theta^2 x_i)^2.
$$
The least square estimate is conceptually defined by
$$
\hat{\theta}=\arg \min_{\theta \in\mathbb{R}} S_n(\theta).
$$
We usually solve the equation $\partial S_n(\theta)/\partial \theta =0$ 
to obtain $\hat{\theta}$. 
Hence $\hat{\theta}$ is taken as a solution to
\[
\sum_{i=1}^{n} (1+2 \theta x_i)(y_i - \theta - \theta^2 x_i ) = 0.
\]
Being cubic in $\theta$, this equation can have up to three real roots.
A diligent applicant will locate all real roots and select the global minimum
of $S_n(\theta)$ as the estimate of $\theta$. 

When the sample size is very large, all real solutions may converge to
the true value. In addition, there is a simple approach to resolving the 
choice among potentially three roots.
This paper uses this model to provide a convenient and well-investigated
example to illustrate the effectiveness of the proposed
{\sc global consistency remedy} as well as the {\sc global maximum test}.

Suppose we wish to apply the EL approaches to data analysis under this
regression model without the parametric error distribution assumption. 
We may hence work with the profile EL function based on the estimating function
$$
g_1(x, y; \theta) =  (1+2 \theta x)(y - \theta - \theta^2 x ).
$$	
Further, we find that $\bbE\{ g_1(X, Y; \theta)\} =0$  has 3 real roots 
when the true value $\theta^*=1$, $\text{var}(X) = 1$, and $\bbE(X)$ is sufficiently large.
Hence, condition C1 does not hold in general.

At the same time, if  $\sum g_1(x_i, y_i; \theta)=0$ has three roots, then they
are all global maxima of the profile EL function.
Due to the violation of condition C1, not all of them are consistent.
The asymptotic results of \cite{QinLawless1994} falters.
Our proposed test for global maximum is inapplicable.

However, the remedy for global consistency is useful.
Assume the regression model \eqref{regr.model} and the
finite second moment of the error distribution.
It is in fact as a restricted linear regression model.
This readily leads to another unbiased estimating function
\[
g_2(x, y; \theta) = x(y - \theta - \theta^2 x).
\]
Combining $g_1(x, y; \theta)$ and $g_2(x, y; \theta)$ and after
simplification, we may use the profile EL based on 2-dimensional estimating
function
\begin{equation}
\label{regr.g}
g(x , y; \theta )
=
\begin{pmatrix}
  y - \theta - \theta^2 x  \\
x (y - \theta - \theta^2 x)
\end{pmatrix}.
\end{equation}
Note that we have replaced $g_1$ by $g_1 - 2 \theta g_2$.
The model is now over-defined with $q=1 < m=2$.
The PELR function has form
\[
W_n(\theta)=-\sum_{i=1}^{n}\log[1+\tilde{\lambda}^{\tau}g(x_i, y_i; \theta)],
\]
with $\tilde{\lambda}$ being vector valued Lagrange multiplier. 
The MELE of $\theta$ is the global maximum of $W_n(\theta)$
and it is consistent.

We next verify Theorem \ref{global.consistency}
conditions C1-C5 for $g(x, y; \theta)$ in \eqref{regr.g} under additional but generic
conditions: (1) $0 < \bbE(\epsilon^4)<\infty$, (2)  $0 < \bbE(X^4)<\infty$,
and $\epsilon$ and $X$ are independent.

First, the current $\bbE\{ g(X, Y; \theta)\}$ is the partial
derivative of $\bbE\{ (Y - \beta_0 - \beta_1 X)^2\}$ at
$\beta_0 = \theta$ and $\beta_1 = \theta^2$.
Because  $\bbE\{ (Y - \beta_0 - \beta_1 X)^2\}$ has unique
minimum, the solution to $\bbE\{ g(X, Y; \theta)\}=0$ must be unique.
Condition C2 is obviously satisfied under moment assumptions.
Further, we have
\[
\frac{\partial g(x, y; \theta)}{\partial \theta}
=
\begin{pmatrix}
- (1 + 2 \theta x)\\
- x(1 + 2\theta x )
\end{pmatrix}.
\]
Over any compact parametric set such as $| \theta | > M$, 
it is seen
\[
 \norm{\partial g(X,Y;\theta)/{\partial \theta}}  \leq  (1+|x|) ( 1 + 2 M |x| ).
 \]
Clearly $\bbE \{ (1+|X|) ( 1 + 2 M |X| )\}^2 <\infty$. 
This shows that C3 is satisfied.

Condition C4 is obviously true because the space of $\theta$, ${\bf R}$
is a close set.

Finally, let $b(\theta) = 1 +  \theta^2$.
It is seen that
\[
\lim_{|\theta |\to\infty} \frac{g(X,Y; \theta)}{b(\theta)}
=
\begin{pmatrix}
- X\\
-X^2
\end{pmatrix},
\]
which has finite expectation. 
As the limit does not depend on $\theta$, we
get
$\lim_{r\to\infty} \Delta_h(X,Y; r)=0$.
Therefore, we have verified (i) and (iii) of (C5).
Finally, we have
$$
\bbE[g(X,Y; \theta)]
=
\begin{pmatrix}
\bbE(Y) - \theta - \theta^2 \bbE(X) \\
\bbE(XY) - \theta \bbE(X) - \theta ^2\bbE(X^2)
\end{pmatrix}.
$$
Hence 
$$
\lim_{|\theta| \to\infty}
\frac{\norm{\bbE\{ g(X,Y; \theta)\} }}{b(\theta )}> 0
$$
and (ii) of C5 holds. 
By Theorem \ref{global.consistency},
the MELE $\hat{\theta}$ is globally consistent.

\subsection{Curved Exponential Family}
Exponential family enjoys many nice statistical properties for estimation and inference
that are not shared by non-exponential families \citep{Efron2022}.
In addition, \cite{Efron1975, Efron1978} study a subset of the exponential family, 
which is referred to as curved exponential family.  
\cite{Efron1975} uses curvature to quantify how nearly `exponential' a curved exponential family is. 
When the curvature of the curved exponential family is small, 
the properties of exponential family are generally retained. 

The score equation for a curved exponential family often has multiple roots,
see \cite{Barndorff, Sundberg}. 
However, locating the consistent global maximum is not always straightforward 
in the parametric likelihood context. 
Our remedy for global consistency provides an ideal solution to this problem.
We use a simple example to make this point.

Suppose we have iid observations $x_1, \ldots, x_n$ from a distribution function $F$.
Instead of directly assuming that $F$ is a member the curved exponential family $N(\theta, \theta^2)$,
one may work with a nonparametric $F$ possessing a
well-motivated unbiased estimating function.
The score function of $N(\theta, \theta^2)$ in this case is an
unbiased estimating function:
\[
 g_1(x;\theta) =\frac{x^2}{\theta^3}-\frac{x}{\theta^2}-\frac{1}{\theta}. 
 % Why not x^2 -  \theta x - \theta^2 ?
\]
The corresponding estimating equation is quadratic in $\theta$ and
which has two real roots in most cases. 
In theory, $|\theta| = \infty$ is also a solution.
Similar to the nonlinear model example, one can identify the
global maximum of the parametric likelihood, if assumed.

If we activate the profile EL function built on $g_1(x; \theta)$,
it will attain the global maximum at both two roots, excluding
$\theta = \pm \infty$. Clearly, our global consistency conclusion
does not apply because $g_1(X;\theta)$ does not satisfy conditions C1-C5. 
However, our results work differently.
It is seen that the curved exponential family is a sub-family of a regular
exponential family, which comes with a set of regular score functions.
A systematic approach is then activating the profile EL function built on
estimating functions made of regular score functions together with
a parametric structure. 
For this particular model, the resulting (vector-valued) estimating function
is simply
\begin{equation}
\label{curve.funct}
g(x; \theta)
=
\begin{pmatrix}
x - \theta \\
x^2 - 2 \theta^2
\end{pmatrix}.
\end{equation}
Note this approach does not work under the parametric likelihood as
the set of estimating functions does not have a solution in general.

More generally, the curved exponential family has density function
(with respect to some $\sigma$-finite measure) in the form
$$
f(x;\theta)=\exp\big [ \sum_{j=1}^{m} q_j(\theta)T_j(x) - h(\theta)\big ]
$$
for some functions $q_j(\theta)$, statistics $T_j(x)$, and normalization constant
\[
h(\theta) = 
\log \Big [ \int \exp\big \{\sum_{j=1}^{m}q_j(\theta)T_j(x) \big \} dx \Big ].
\]
In general, $\dim(\theta) < m$ for curved exponential family. 
See \cite{Lehmann1998}.
Its score function is given by
\[
s(x; \theta) 
=
\sum_{j=1}^{k} [q_j'(\theta)T_j(x) ]-   h'( \theta)
\]
where $h'(\theta)$ denotes the derivative.

Let $\eta_j = q_j(\theta)$ for $j=1, \ldots, m$ and $\eta$ for the vector form. 
Then, we must have $h(\theta) = \tilde h(\eta)$ for some smooth $\tilde h(\eta)$.
Regarding $\eta_j$ as algebraically independent parameters, we obtain
score function for the corresponding regular exponential function:
\[
\tilde s(x; \eta)  = T(x) -   \tilde h'(\eta).
\]
Combining estimating function $\tilde s(x; \eta(\theta))$ with the EL, we
obtain the corresponding MELE $\hat{\theta}_n$. 
It is straightforward to verify that $\tilde s(x; \eta(\theta))$ satisfies C1-C5 in most cases. 
Hence it is globally consistent in general. 

Clearly, our proposed global consistency remedy and the global maximum test
provide powerful solutions to the problems under the curved exponential family.

\subsection{Mixture Model}
Finite mixture models are widely used in various applications 
       \citep{Titterington1985,Mclachlan2019}. 
The MLE is usually the choice of estimation and is often obtained through the popular EM-algorithm
\citep{Dempster1977}.
The MLE is generally consistent \citep{Chen2017}
and the EM-algorithm is guaranteed to converge to a local maximum \citep{Wu1983}.
Yet the multiple root problem, or local maximum issue, persists. 

Let $\phi(\cdot)$ be the density function of the standard normal.
Consider a special two-component normal mixture
model discussed in \cite{Jiang1999} whose density function is given by
\[
f(x; \theta)
=
\frac{\pi}{\sigma_1} \phi(\frac{x-\theta}{\sigma_1})
+
\frac{1-\pi}{\sigma_2}\phi(\frac{x-\mu_2}{\sigma_2}).
\]
We only regard  the first subpopulation mean $\theta$ as an unknown parameter. 
Assume the values of mixing proportion $\pi$, the subpopulation mean $\mu_2$,
and subpopulation variances $\sigma_1^2$ and $\sigma_2^2$ are known.

The score function of this normal mixture model is given by
\[
s(x; \theta) = \frac{(x-\theta)\phi( ({x-\theta})/{\sigma_1})}{f(x;\theta)}
\]
and it tends to have multiple roots 
when $\theta$ and $\mu_2$ are far apart \citep{Titterington1985}.
Once a root of the score function is located, the question of whether the root corresponds to the global maximum arises. 
\cite{Jiang1999} propose a recipe and use this model for illustration.
We remark that this special model is regular in the sense of \cite{Lehmann1998}.
The Bartlett identity \eqref{Bartlett} in this case leads to
\[
\varphi (x; \theta)
= \frac{[(x-\theta)^2-\sigma_1^2]\phi( ({x-\theta})/{\sigma_1})}{f(x;\theta)}
\]
which is an unbiased estimating function. Suppose $\hat \theta_n$ is the MLE
and global maximum of the likelihood, then $\hat \theta_n$ is asymptotically
normal based on $n$ iid observations. Under some smoothness conditions,
there is a function $\sigma^2(\theta)$ such that 
\[
T_n(\hat \theta_n) = \frac{\sum_{i=1}^n \varphi (x_i; \hat \theta_n)}
{ \sqrt{n} \sigma (\hat{\theta}_n)} \to N(0, 1)
\]
in distribution.
When $\hat{\theta}_n$ is not the global maximum and hence not consistent, 
$|T_n(\hat \theta_n)|$ is stochastically larger.
They therefore suggest to reject the claim that $\hat \theta_n$
is a global maximum when $|T_n|$ is sufficiently large. 
Let $I(\theta)=\mbox{var}[s(x;\theta)]$ be the Fisher information
and $ \varphi'(x; \theta)$ be the derivative of $ \varphi(x; \theta)$
with respect to $\theta$. 
According to \cite{Jiang1999}, 
\[
\sigma^2(\theta)
=
\bbE^2[\varphi(X;\theta)]+
I^{-1}(\theta) \big [
2\bbE\{s(X;\theta)\varphi(X;\theta)\}
\bbE \{ \varphi'(X;\theta) \} +
\bbE^2\{\varphi'(X;\theta)\}
\big ].
\]
Clearly,  $\sigma^2(\theta)$ has a complex form.
The step of working out $\sigma^2(\theta)$ or directly estimating this
variance can lead to other complex issues in general.
In comparison, the proposed remedy for global consistency as well as 
the test for global maximum fit the occasion nicely. 
In particular, we may define the profile EL function with 
\[
g(x; \theta) 
=
\begin{pmatrix}
s(x; \theta) \\
\varphi(x; \theta) 
\end{pmatrix}
= \frac{\phi(({x-\theta})/{\sigma_1})}{f(x;\theta)}
\begin{pmatrix}
x-\theta\\ 
(x-\theta)^2-\sigma_1^2
\end{pmatrix}.
\]
Alternatively, we include three score functions in the estimating function $g(x;\theta)$: two for centres of mixture and one for the mixing proportion. 
\[
g(x;\theta)
=\begin{pmatrix}
s(x;\theta)\\s(x;\mu_2)\\s(x;p)
\end{pmatrix}
=
\begin{pmatrix}
\dfrac{(x-\theta)\phi(({x-\theta})/{\sigma_1})}{f(x;\theta)}\\
\dfrac{(x-\mu_2)\phi(({x-\mu_2})/{\sigma_2})}{f(x;\theta)}
\\
\dfrac{\sigma_2\phi((x-\theta)/\sigma_1)-\sigma_1\phi((x-\mu_2)/\sigma_2)}{f(x;\theta)}
\end{pmatrix}
\]
We regard $\mu_2$ and $\pi$ as parameters when computing the corresponding score functions
but use their known values in the above $g(x; \theta)$ and in simulations.

For both options of {\sc Global consistency remedy}, the resulting MELE is globally consistent. 
However, we find that when the sample size is small, the first option of adding $\phi(x;\theta)$ 
often results in other local minima of the EL function being close to 0. 
The second option of adding another two score functions is free from this problem.
Our approach does not require computing $\sigma^2(\theta)$.
Furthermore, the asymptotic distribution of $W_n(\hat \theta)$ is much simpler.

Judging whether or not a limit produced by EM-iteration is in fact 
a global maximum is a hugely important problem. 
We do not wish to generalize the result on this toy example hastily
but keep it as a promising future project.

\subsection{Asset Pricing}
In econometrics, the parameter of interest is often defined through estimating equation 
$\bbE[g(X;\theta)]=0,$ where $\dim(g)=m$ and $\dim(\theta)=q$. 
When $m=q$, we often solve the sample estimating equation
\[
\sum_{i=1}^{n} g(x_i;\theta)=0
\]
to estimate the unknown parameter $\theta$ \citep{Godambe}. 
However, when $m>q$, the sample estimating equation 
generally does not have a solution. 
We can use both generalized method of moments (GMM) and empirical likelihood (EL) 
for inference on the unknown parameter $\theta$ \citep{Hansen,QinLawless1994}. 

We now present an asset price example of
\cite{Hall}, \cite{Imbens1998} and \cite{Schennach}. 
The parameter of interest in their example is defined by
\begin{equation}
\label{price.eqn}
g(x; \theta) =
\begin{pmatrix}
\exp\{-0.72 - \theta(x_1+x_2)+3x_2\}-1\\
x_2 [ \exp\{-0.72 - \theta(x_1+x_2)+3x_2\}-1]
\end{pmatrix}.
\end{equation}
We consider the case when $\theta^*=3$ and
$(x_1, x_2)$ are iid sample from two independent $N(0, 0.16)$.
It is seen that 
\[
\bbE \{ g_1(X;\theta)]\}  =\exp \{-0.72+0.08 \theta^2+0.08(3-\theta)^2\}-1.
\]
Hence $\bbE[g_1(X; \theta)]=0$ has two roots at $\theta = 0$ and 3 which
violates condition C1. 
Theorem \ref{global.consistency} therefore does not apply. 
The MELE based on profile EL solely associated with $g_1(x; \theta)$
is not consistent.
Instead, we should employ the profile EL based on $g(x; \theta)$
given in \eqref{price.eqn}.
It is seen
\[
\bbE \{g_2(X;\theta)\}=0.16(3-\theta)\exp\{ -0.72+0.08\theta^2-0.08(3-\theta)^2\}
\]
and $\bbE\{g_2(X;\theta)\}=0$ has a unique root at $\theta=3$.
Consequently, $E \{g(X;\theta) \}=0$ has a unique root at $\theta=3.$ 
When $X_1, X_2$ are independent with $N(0, 0.16)$ distribution, 
we have
\[
\bbE \{ g_1^2(X;\theta)\}
=
\exp\{ 0.32\theta^2+0.32(3-\theta)^2-1.44\} - 2\exp\{ 0.08\theta^2+0.08(3-\theta)^2-0.72\} +1
\]
which is finite and also $\bbE[g_2^2(X;\theta)]<\infty$.
It is then obvious that $\bbE\{g(X;\theta)g^{\tau}(X;\theta)\}$ is positive definite
which validates condition C2 is. 
We have
\[
\frac{\partial g(x;\theta)}{\partial\theta}
=\begin{pmatrix}
-(x_1+x_2)g_1(x;\theta)\\
-x_2(x_1+x_2)g_1(x;\theta)
\end{pmatrix}.
\]
In a compact set of $|\theta|<M$, 
\[
\bbE[g_1^2(X;\theta)]
\leq 
\exp[0.64(M-1.5)^2]+2\exp[0.08(M-1.5)^2]+1.
\]
By the Cauchy-Schwartz inequality, it is easily implies
\[
\bbE \norm{\frac{\partial g(X;\theta)}{\partial\theta}}<\infty
\]
which validates C3.
Let 
\[
b(\theta)=(|\theta|+1)\exp(0.16\theta^2-0.48\theta),
\]
and $h(X;\theta)=g(X;\theta)/b(\theta)$.
We find
\[
E[h(X;\theta)]
=\begin{pmatrix}
\dfrac{1}{|\theta|+1}-\dfrac{1}{(|\theta|+1)\exp(0.16\theta^2+0.48)}\\ 
\dfrac{0.16(3-\theta)}{|\theta|+1}
\end{pmatrix}.
\]
Hence,
\[
\lim_{|\theta|\to\infty}\bbE[h(X;\theta)]
=
\begin{pmatrix}
0\\
-0.16
\end{pmatrix}
\]
which implies $\lim_{|\theta|\to\infty}\norm{\bbE[h(X;\theta)]}>0$
and validates C5 (ii).
Furthermore, we have
\begin{align*}
|h_1(x;\theta)|
&=|\frac{g_1(x;\theta)}{b(\theta)}|
\\
&\leq \frac{1}{|\theta|+1}\exp[-1.2+|\theta|(|x_1|+|x_2|)+3x_2-0.16\theta^2]
\\
&=\frac{1}{|\theta|+1}\exp[-0.16(|\theta|-\frac{25}{8}(|x_1|+|x_2|))^2+\frac{25}{16}(|x_1|+|x_2|)^2+3x_2]
\\
&\leq \frac{1}{|\theta|+1}\exp[\frac{25}{16}(|x_1|+|x_2|)^2+3x_2].
\end{align*}
Hence, $\lim_{|\theta|\to\infty}|h(x;\theta)|=0$.
Similarly, we find $\lim_{|\theta|\to\infty} |h_2(x;\theta)|=0$.
These validate C5 (i) and (iii).

In conclusion,  we have verified conditions C1-C5.
Hence the MELE based on the profile EL associated with estimating function $g(X;\theta)$
is consistent. Our proposed approaches provide general solutions to
the multiple-root problem and the problem of determining whether a local maximum
is in fact a global maximum that arise in statistical analysis of date under the asset pricing model.

\section{Numerical experiments}

We assess the effectiveness of the proposed approaches
based on examples given in previous sections. 

{\bf Cauchy model}
As already claimed, the score equation under Cauchy model 
\eqref{Cauchy.score} has multiple roots \citep{Small}. 
We first use experiment to verify that these multiple roots do not converge
as the sample size $n$ increases.
We generate a random sample of size $n$ from the standard Cauchy distribution
with $n= 50$, $100$, and $200$. 
We record the proportion of times out of 1,000 replications 
when there exist roots with absolute value larger than 1. 
See Table \ref{Table.Cauchy.root}. The percentage
does not get smaller as the sample size increases.

\begin{table}[ht]
\caption{Proportion of times the Cauchy score function admits extraneous roots}
\label{Table.Cauchy.root}
\centering
\medskip
\begin{tabular}{c c}
\hline\hline
$n$ & \% of roots outside $[-1,1]$ \\
\hline
50 & 27.2\\
100 & 26.0\\
200 & 25.7\\
 %heading
\hline
\hline
\end{tabular}
\end{table}

If we locate all roots of the score equation,
we can then use the parametric likelihood to identify the global maximum. 
The parametric global maximum is a consistent estimator. 
Our interest of this model is: are our proposed two approaches effective? 

Directly defining a profile EL function $\el(\theta)$ with estimating function 
\eqref{Cauchy.score} leads to multiple global maximum of $\el(\theta)$.
Because Cauchy score function does not satisfy conditions C1-C5, 
our global consistency conclusion does not apply.
We suggest that the global consistency remedy work in this case.
Suppose in addition to the knowledge that \eqref{Cauchy.score}
is an unbiased estimating function, we also know that $F$ is symmetric.
We may add $(x-\theta)^{1/3}$ as another component
of the estimating function. This leads to
estimating function \eqref{Cauchy.eq2}.
This choice has also reflected the prior knowledge
that the distribution of $X$ has a long tail.
  
We restrain from verifying that \eqref{Cauchy.eq2} satisfies C1-C5. 
The process is routine and similar to the one for the non-linear regression  model. 
Our simulation obtains the MELE with estimating function \eqref{Cauchy.eq2}
and parametric MLE of $\theta$ based on data from standard Cauchy distribution.
The number of repetitions is $N=1000$. 
We calculate the mean and variance of 1,000 estimates
and present the results in Table \ref{Cauchy.MELE}. 
Because there may be multiple local maxima of both the EL and likelihood function, 
we start with a fixed set of initial values and use the Newton-Raphson algorithm to search for all local maxima. 
Then we compare the values of all local maxima to make sure we correctly locate the MLE and MELE.

%{\it Any issues on local maximum ? How do you decide?}

\begin{table}[ht]
\caption{Mean and variance of MELE and MLE based on 1,000 simulated data sets}
\centerline{Cauchy example}
\centering
\label{Cauchy.MELE}
\medskip
\begin{tabular}{c c c c c c}
\hline\hline
 & \multicolumn{2}{c}{MELE} & & \multicolumn{2}{c}{MLE} \\\cline{5-6}\cline{2-3}
$n$ & Mean & Variance & & Mean & Variance\\
\hline
50 & -0.0116 & 0.0482 & & -0.0050 & 0.0411\\
100 & -0.0030 & 0.0211 & &-0.0020 & 0.0202 \\
200 & 0.0039 & 0.0101 & & -0.0035 & 0.0098\\
 %heading
\hline
\hline
\end{tabular}
\end{table}

As expected, the simulation results indicate that both the means of MELE and 
MLE are reasonably close to the true parameter value $\theta^* = 0$ with 
indistinguishable differences in mean and variance.
According to \cite{QinLawless1994}, the MELE is an optimal estimator
of $\theta$ if $g(x; \theta)$ contains the score function.
Adding more unbiased estimating functions
to the score function does not lead to more efficient estimator.
Yet our result reveals an interesting phenomenon:
adding unbiased estimating functions
achieves global consistency and dispels nuisance local maxima.
Figure \ref{Figure.cauchy}  demonstrates that the MELE is
asymptotically normal and well behaved.

\begin{figure}
\label{Figure.cauchy}
    \centering
    \includegraphics[width=0.6\textwidth]{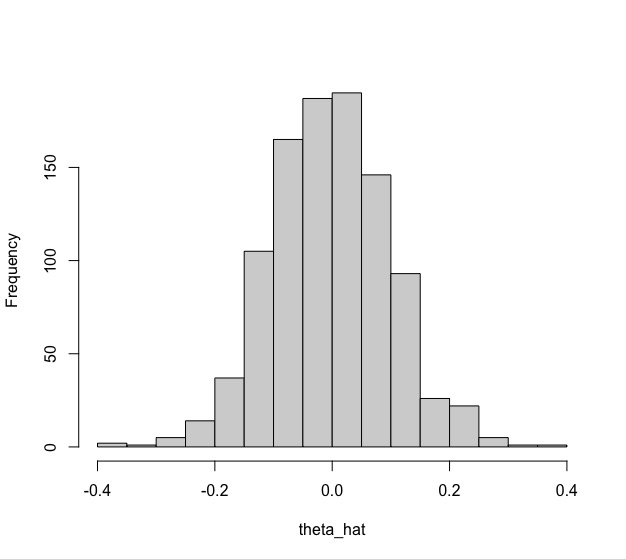} 
    \caption{Histogram of the MELE under Cauchy model, $n=200.$}
    \label{fig:1}
\end{figure}

\vskip 1em
\noindent
{\bf Non-linear regression model \eqref{regr.model}}.
We generate a random sample of size $n$ from $N(5, 4)$ for covariate $x$
and a random sample of size $n$ from $N(0,1)$ for error $\epsilon$. 
We let $\theta=1$ and obtain $y_i =\theta +\theta^2x_i+\epsilon_i$
as specified. These choices are the same as \cite{Veall}.

We define the EL function $el_n(\theta)$
with estimating function given by \eqref{regr.g}.
Given a random generated data set, most often,
$el_n(\theta)$ has a global maximum close to $\theta^* = 1$
and  a local maximum close to $\theta =  -1$. 
In this experiment, we use the proposed Global Maximum Test
to see if a local maximum is the global maximum of $el_n(\theta)$. 
We also use the test proposed in \cite{DeHaan} to see if a local minimum is the global minimum of the least square objective function.
We set the nominal rejection rate to be $\alpha$: the
percentage of global maxima being rejected.
This study generates $N=1,000$ data sets and we record the 
percentage of times when a local or a global maximum is rejected.
We experiment with $n=30, 50$ and $100$. For test of \cite{DeHaan}, 
we uniformly sample $m$ points in the interval $[-5,5]$ with $m=50, 100$. 
The simulation results in Table \ref{Table.regr.var} indicate that the variances of 
MELE and MLE are very close when $n$ is larger than 50.
As seen in Table \ref{Table.regr}, the rejection rates at local maxima of the proposed test are much higher than those of the test of \cite{DeHaan}. We conclude that our proposed test is clearly superior.

\begin{table}[ht]
\caption{Mean and variance of MELE and MLE under non-linear regression model}
\label{Table.regr.var}
\centering
\medskip
\begin{tabular}{c c c c c c}
\hline\hline
 & \multicolumn{2}{c}{MELE} & & \multicolumn{2}{c}{MLE} \\\cline{5-6}\cline{2-3}
$n$ & Mean & Variance & & Mean & Variance\\
\hline
30 & 0.9993 & $2.98\times 10^{-4}$ & & 0.9994 & $2.76\times 10^{-4}$\\
50 & 0.9999 & $1.52\times 10^{-4}$ & & 0.9995 & $1.50\times 10^{-4}$\\
100 & 1.0000 & $7.02\times 10^{-5}$ & & 1.0000 & $6.92\times 10^{-5}$\\ 
 %heading
\hline
\hline
\end{tabular}

%\label{table:nonlin}
\end{table}
\begin{table}[ht]
\caption{Rejection rates of the proposed test and the test of \cite{DeHaan}}
\centerline{Non-linear regression model example, $\alpha=0.05$}
\label{Table.regr}
\begin{tabular}{c c c c c c c c c}
\hline\hline
 & \multicolumn{2}{c}{Proposed Test} & 
    & \multicolumn{2}{c}{Test of DeHaan ($m=50$)} 
 &  & \multicolumn{2}{c}{Test of DeHaan ($m=100$)}\\\cline{8-9}\cline{5-6}\cline{2-3}
$n$ & Global & Local & & Global & Local & & Global & Local\\
\hline
30 &  0.096 & 0.976 & & 0.032 & 0.482 & & 0.043  & 0.694 \\
50 &  0.075 & 1 & & 0.035 & 0.508 & & 0.032 & 0.748 \\
100 & 0.067 & 1 & & 0.027 & 0.510 & & 0.023 & 0.772 \\
 %heading
\hline
\hline
\end{tabular}
\end{table}

\vskip 1em
\noindent
{\bf Curved exponential family $N(\theta, \theta^2)$}.
We now compare the performance of the proposed Global Maximum Test
with the test proposed by \cite{Jiang1999}.
 We generate a random sample of size $n$ from $N(1, 1)$.
The population distribution is regarded as a member of curved
parametric family $N(\theta, \theta^2)$ 
when the test of \cite{Jiang1999} is used.
Under nonparametric model, we assume the knowledge of
unbiased estimating function  \eqref{curve.funct}.

Both the nonparametric EL function and the 
parametric likelihood function have two local maxima. 
We generate $N=1000$ data sets and record the 
percentage of times when a local or a global maximum is rejected
for both approaches.

The simulation results in Table \ref{Table.curved} indicate that the 
proposed test rejects much less often of global maximum, 
and far more often of local maximum
than the test of \cite{Jiang1999} across all sample sizes considered. 
We summarize the mean and variance of MELE and MLE in Table \ref{Table.curved.var}. 
The variances of MELE and MLE are almost identical even when $n$ is small.

\begin{table}[ht]
\caption{Mean and variance of MELE and MLE under curved normal model}
\label{Table.curved.var}
\centering
\medskip
\begin{tabular}{c c c c c c}
\hline\hline
 & \multicolumn{2}{c}{MELE} & & \multicolumn{2}{c}{MLE} \\\cline{5-6}\cline{2-3}
$n$ & Mean & Variance & & Mean & Variance\\
\hline
30 & 0.9879 & 0.0108 & & 0.9954 & 0.0104\\
50 & 0.9955 & 0.0069 & & 0.9985 & 0.0068\\
100 & 0.9987 & 0.0032 & & 0.9995 & 0.0032\\  
 %heading
\hline
\hline
\end{tabular}
%\label{table:nonlin}
\end{table}

\begin{table}[ht]
\caption{Rejection rates of the proposed test and the test of \cite{Jiang1999}}
\centerline{Curved normal model, $\alpha=0.05$}
\label{Table.curved}
\begin{tabular}{c c c c c c}
\hline\hline
 & \multicolumn{2}{c}{Proposed Test} & & 
       \multicolumn{2}{c}{Test of \cite{Jiang1999}} \\\cline{5-6}\cline{2-3}
$n$ & Global & Local & & Global & Local\\
\hline
30 &  0.076 & 1 & & 0.181 & 0.202\\
50 &  0.050 & 1 & & 0.189 & 0.327\\
100 & 0.048 & 1 & & 0.168 & 0.553\\
 %heading
\hline
\hline
\end{tabular}
\end{table}

\vskip 1em
\noindent

{\bf Mixture Model}
As mentioned in section 4.3, the EL function with three score functions makes it easy to identify the global maximum of the EL function since all local maxima of the EL function tend to be much lower than the global maximum. Or equivalently, all local minima of the profile EL function tend to be much higher than the global minimum.

Figure \ref{fig:2} shows the plot of the profile EL function with a randomly generated sample 
of size 100 from $0.4N(0,1)+0.6N(10,16)$. 
It appears that the global minimum is much lower than other local minima. 
In addition, according to \cite{QinLawless1994}, adding another two score functions 
does not change the asymptotic variance of MELE. 
We support this claim by simulation results with estimating function
\[
g(x;\theta)
=
\begin{pmatrix}
s(x;\theta)\\s(x;\mu_2)\\s(x;p)
\end{pmatrix}.
\]
We compare the performance of  the proposed {\sc Global Maximum Test}
with the test of \cite{Jiang1999}.
We generate a random sample of size $n$ from $0.4N(0,1)+0.6N(10,16)$.
We replicate the simulation $N=1,000$ times. 
We assume all other parameters of the mixture model are known except 
for the centre of the first mixture component. 

\begin{figure}
    \centering
    \includegraphics[width=0.6\textwidth]{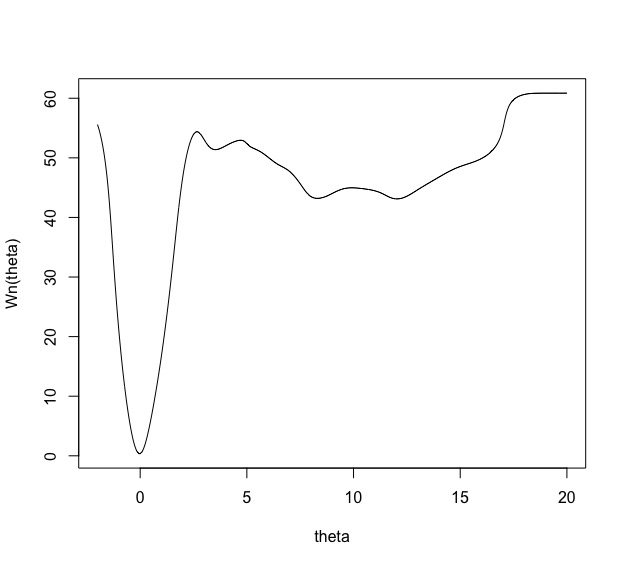} 
    \caption{One typical profile EL function with $g(x;\theta)$ under mixture model, $n=100$}
    \label{fig:2}
\end{figure}

For each data set, we apply two tests to
the global maximum and the largest local maximum located inside the interval $[-2,20]$. 
When $\theta$ is far away from $\theta^*$, the convex hull of $g(x;\theta)$ may not contain 0. 
To avoid this problem, we use the AEL of \cite{Asokan} with a pseudo point $g_{n+1}=-\bar{g}_n$.
This option does not change the first order asymptotic properties of
the approaches we proposed. 
We refer readers interested in AEL to this paper.

We record the rejection rates in Table \ref{Table.mixture}. 
We also summarize the mean and variance of the MELE and MLE in Table \ref{Mixture.MELE}. 
We note that the $W_n(\hat{\theta})$ has a $\chi^2(2)$ limiting distribution with two added score functions. 
The proposed test has well controlled rejection rates
of the global maximum and much higher reject rates
of the local maximum compared to the test of \cite{Jiang1999}. 
According to Table \ref{Mixture.MELE}, the bias and variance of MELE and MLE are almost identical across all sample sizes considered. 

Alternatively, we may also only combine the score functions for two mixture means. The resulting $W_n(\hat{\theta})$ has a $\chi^2(1)$ limiting distribution. 
However, with three score functions, we have maximal information about the mixture model. 
It turns out that it is easier to differentiate the global minimum from the local minima. 
The MELE also has smaller bias and variance in general.

\begin{table}[ht]
\caption{Rejection rates of the proposed test and the test of  \cite{Jiang1999}}
\centerline{Mixture model, $\alpha=0.05$}
\label{Table.mixture}
\begin{tabular}{c c c c c c}
\hline\hline
 & \multicolumn{2}{c}{Proposed Test} & & 
       \multicolumn{2}{c}{Test of \cite{Jiang1999}} \\\cline{5-6}\cline{2-3}
$n$ & Global & Local & & Global & Local\\
\hline
100 &  0.046 & 1 & & 0.087 & 0.194\\
200 &  0.049 & 1 & & 0.061 & 0.668\\
500 &  0.064 & 1 & & 0.059 & 0.988\\
 %heading
\hline
\hline
\end{tabular}
\end{table}

\begin{table}[ht]
\caption{Mean and variance of MELE and MLE based on 1,000 simulated data sets}
\centerline{Mixture model}
\centering
\label{Mixture.MELE}
\medskip
\begin{tabular}{c c c c c c}
\hline\hline
 & \multicolumn{2}{c}{MELE} & & \multicolumn{2}{c}{MLE} \\\cline{5-6}\cline{2-3}
$n$ & Mean & Variance & & Mean & Variance\\
\hline
100 & 0.0132 & 0.0288 & & 0.0132 & 0.0288\\
200 & 0.0069 & 0.0136 & & 0.0068 & 0.0136 \\
500 & 0.0041 & 0.0055 & & 0.0041 & 0.0055\\

 %heading
\hline
\hline
\end{tabular}
\end{table}
\vskip 1em

\section{Conclusion and discussions}
This paper identifies the conditions under which the global maximum of the 
empirical likelihood function is strongly consistent. 
When these conditions are not satisfied under a common setting, 
we propose a generic global consistency remedy. 
The remedy leads to new profile EL that satisfies these conditions
and has a unique consistent global maximum. 
After the adoption of the global consistency remedy, 
the profile EL function can still have multiple local maxima.
We propose to avoid local maximum with a novel global maximum test.
Our proposed global maximum test  has well-controlled rejection rates 
for the global maximum and much higher power to detect the non-global local maximum 
compared to the tests of \cite{Jiang1999} and \cite{DeHaan}.
Our global consistency remedy often identifies
the model under investigation as a sub-model of a general model.
We supplement the existing set of estimating functions in view of the
general model. This is a new technique that is broadly applicable.

\vskip 3em

\begin{appendix}

\noindent
{\bf Appendix.} This section gives detailed proofs of Lemma
\ref{lemma2.3} and Theorem \ref{global.consistency}.

\vspace{1em}
\noindent
{\bf Proof of Lemma \ref{lemma2.3} (a)}.
By the Lipschitz continuity condition (C3),
for any $\theta' \in B(\theta, \rho)$, we have
\[
\norm{\bar g_n(\theta') - \bar g_n(\theta)} 
\leq 
n^{-1} \sum_{i=1}^{n} \norm{g(x_i; \theta') -  g(x_i; \theta)}
\leq 
n^{-1} \norm{\theta'-\theta} \sum_{i=1}^{n}G(x_i).
\]
Hence, by the strong law of large numbers, we have
\[
\sup_{\theta'\in B(\theta,\rho)}\norm{\bar g_n(\theta')-\bar g_n(\theta)}
\leq 
\frac{\rho}{n}\sum_{i=1}^{n}G(x_i)=\rho E[G(X)]+o(1).
\]
Consequently,
\[
\sup_{\theta' \in B(\theta, \rho)}
\norm{\widehat{\lambda}(\theta')-\widehat{\lambda}(\theta)} 
= 
n^{-3/4}\sup_{\theta'\in B(\theta,\rho)}\norm{\bar g_n(\theta')-\bar g_n(\theta)}
=
O(n^{-3/4}).
\]
Since $\widehat{\lambda}(\theta)=n^{-3/4}\bar g_n(\theta)=O(n^{-3/4})$, 
we conclude
\[
\sup_{\theta'\in B(\theta,\rho)}\norm{\widehat{\lambda}(\theta')}=O(n^{-3/4}).
\]
This completes the proof of Lemma \ref{lemma2.3} (a).

\vspace{1em}
\noindent
{\bf Proof of Lemma \ref{lemma2.3} (b)}.
Similarly, since $\norm{g(x_i;\theta') - g(x_i; \theta)}\leq \rho G(x_i)$,
we have
\[
\sup_{\theta'\in B(\theta, \rho)} \norm{g(x_i;\theta')} 
\leq 
\norm{g(x_i;\theta)}+\rho G(x_i).
\]
Applying Lemma \ref{lemma.owen} to order statistics of
$g(x_i;\theta)$ and $G(x_i)$,  we get
\[
\max_i \sup_{\theta'\in B(\theta,\rho)} \norm{g(x_i;\theta')}
\leq 
\max_i \{ \norm{g(x_i;\theta)}+\rho G(x_i) \}
= o(n^{1/3}).
\]
Therefore, 
\[
\max_{1\leq i \leq n} 
\sup_{\theta' \in B(\theta,\rho)} g(x_i;\theta')
=
o(n^{1/3}).
\]
This completes the proof of Lemma \ref{lemma2.3} (b).

\vspace{1em}
\noindent
{\bf Proof of Lemma \ref{lemma2.3} (c)}.
Based on moment conditions C2 and C3 and the strong law of
large numbers, we have both 
\[
\sum_{i=1}^n g(x_i; \theta) = O(n);  ~~\sum_{i=1}^n G(x_i) = O(n).
\]
Using triangular inequality, we have
\[
\sum_{i=1}^n \norm{g(x_i;\theta')}^2
\leq 
\sum_{i=1}^n \norm{g(x_i;\theta)}^2
+
\rho^2 \sum_{i=1}^n \norm{G(x_i)}^2
=  O(n).
\]
Note that the upper bound does not depend on $\theta'$
other than requiring $\theta' \in B(\theta, \rho)$.
This completes the proof of Lemma \ref{lemma2.3} (b).

\vspace{1em}
\noindent
{\bf Proof of Lemma \ref{lemma2.3} (d)}.

Suppose the convex condition holds and
$\lambda^*(\theta) \in \mathbb{D}(\theta)$ 
is a value depends on $\theta$ and solves \eqref{Lag.eqn}. 
In this case, $\Omega_n(\lambda^*(\theta); \theta) = W_n(\theta)$.

Applying Taylor's expansion,  for all $\theta' \in B(\theta,\rho)$, we have
\begin{equation}
\label{Taylor.ser}
\sum_{i=1}^{n}\log[1+\widehat{\lambda}^{\tau}(\theta') g(x_i; \theta')]
= 
n \widehat{\lambda}^{\tau}(\theta') \bar g_n(\theta') - \frac{1}{2} \epsilon_n(\theta'),
\end{equation}
with the remainder term given by
\[
\epsilon_n(\theta')
=
\widehat{\lambda}^{\tau}(\theta')
\sum_{i=1}^{n}\frac{g(x_i; \theta')  g^{\tau}(x_i;\theta')}{(1+\xi^\tau (\theta') g(x_i;\theta'))^2}
\widehat{\lambda}(\theta'),
\]
and $\xi(\theta')$ being a value between 0 and $\widehat{\lambda}(\theta')$. 
We next assess the order of two terms.

With $\rho$ small enough and $\bbE[g(X; \theta)] \neq 0$,
we have $\norm{\bbE[g(X; \theta)]} - \rho \bbE[G(X)]= \delta < 0$.
Activating Lipschitz condition and the strong law of large numbers, we find
\[
\inf_{\theta'\in B(\theta,\rho)} \norm{\bar g_n(\theta')} \geq \delta+o(1).
\]
Hence, almost surely, we have
\begin{equation}
\label{first.term}
\inf_{\theta'\in B(\theta,\rho)}
\big[ \widehat{\lambda}^{\tau}(\theta') \bar g_n (\theta') \big ]
=
n^{-3/4}\inf_{\theta'\in B(\theta,\rho)} \norm{\bar g_n(\theta')}^2
\geq 
n^{-3/4}\delta^2.
\end{equation}

To assess the order of $\epsilon_n(\theta')$, we first note
\[
\sup_{\theta'\in B(\theta,\rho)}\norm{\xi(\theta')}
\leq 
\sup_{\theta'\in B(\theta,\rho)}\norm{\widehat{\lambda}(\theta)}
=O(n^{-3/4}).
\]
Further, recall Lemma \ref{lemma2.3} (b) that
\[
\max_{1\leq i\leq n}\sup_{\theta'\in B(\theta,\rho)} \norm{g(x_i;\theta')}=o(n^{1/3}).
\]
Hence, we have
\[
\sup_{\theta'\in B(\theta,\rho)} \max_{1\leq i\leq n}\norm{\xi(\theta') g(x_i;\theta')} 
=o(1).
\]
Consequently, almost surely, we have
\begin{equation}
\label{second.term}
\sup_{\theta'\in B(\theta,\rho)} \norm{\epsilon_n(\theta')} 
\leq
\{ 1+o(1)\} \sup_{\theta'\in B(\theta,\rho)}
\norm{\widehat{\lambda}(\theta')}^2 \sum_{i=1}^{n}\norm{g(x_i;\theta')}^2
=
n^{-1/2} \delta^2.
\end{equation}
Combining two order assessments \eqref{first.term} and \eqref{second.term}, we
arrive at the conclusion
\[
W_n(\theta; \rho)
\geq
\inf_{\theta'\in B(\theta,\rho)}
[  n \widehat{\lambda} \bar g_n(\theta')  -  \norm{\epsilon_n(\theta')} ]
\geq 
n^{1/4}[\delta + o(1)].
\]
Hence $W_n(\theta,\rho) \to \infty$ almost surely. 
This completes the proof of Lemma \ref{lemma2.3} (d).

%% Need to show $W_n^* < n^{1/4} \delta$ almost surely.

\vspace{1em}
\noindent
{\bf Proof of Lemma \ref{lemma2.3} (e)}.
Without loss generality, let $b(\theta)=1$.

We first show that there exists $r > 0$ such that
\begin{equation}
\label{eqn.lemma2.3.e}
\max_{1\leq i\leq n}
\sup_{\norm{\theta}\geq r}\norm{\widehat {\lambda}^{\tau}g(x_i;\theta)}=o(1).
\end{equation}
This conclusion would imply
\[
\min_{1\leq i\leq n}\inf_{\norm{\theta}\geq r}
  	[1+\widehat {\lambda}^{\tau} g(x_i;\theta)] > 0
\]
almost surely which ensures $W_n(\widehat{\lambda}, \theta)$ 
is well-defined for all $\norm{\theta} \geq r$.

To prove \eqref{eqn.lemma2.3.e}, we first note that
\[
\norm{\widehat {\lambda}^{\tau}(\theta) g(x_i;\theta)}
\leq
\norm{\widehat {\lambda}(\theta)} \norm{g(x_i; \theta)}.
\]
We now assess the order of two factors on the right-hand side of
the above inequality over ${\norm{\theta}\geq r}$ and $1 \leq i \leq n$.
By (ii) in Condition C5, there exists an $\epsilon > 0$ so that for
all sufficiently large $r$,
\[
\inf_{\norm{\theta}\geq r}\norm{\bbE[g(X; \theta)]} \geq 2\epsilon.
\]
Let $\theta_0 \in \Theta$ be a parameter value such as $\norm{\theta_0}=r$. 
By the law of large numbers, we have
\[
\bar g_n(\theta_0) = n^{-1} \sum_{i=1}^{n} g(x_i; \theta_0 )  \to \bbE [g(X; \theta_0)]
\]
almost surely, which implies
\(
\norm{\bar g_n(\theta_0)} \geq 2\epsilon.
\)
By (iii) of (C5), we can choose $r$ large enough such that
$ \bbE[\Delta(X; r)]<\epsilon $.
Hence, for all $\theta$ such as $\norm{\theta}\geq r$, 
\[
\norm{\bar g_n(\theta)}
\geq 
\norm{\bar g_n(\theta_0 )}- n^{-1} \sum_{i=1}^{n}\Delta(x_i; r)
\geq 
2\epsilon-\epsilon
= \epsilon
\]
almost surely. Apparently, this inequality holds uniformly over 
$\norm{\theta}\geq r$.

At the same time,  by C5 (i), we have
\[
\sup_{\norm{\theta}\geq r}\norm{\bar g_n(\theta)}
\leq
n^{-1} \sum_{i=1}^{n}\sup_{\norm{\theta}\geq r}\norm{g(x_i;\theta)}
=O(1).
\]
Applying Lemma 11.2 in \cite{Owen2001} under C5 (i), 
\[
\max_{1\leq i\leq n}\sup_{\norm{\theta}\geq r}\norm{g(x_i;\theta)}=o(n^{1/3}).
\]
Recall $\widehat \lambda(\theta) = n^{-3/4} \bar g_n(\theta)$.
Hence, uniformly over $1 \leq i \leq n$, we have
\[
\sup_{\norm{\theta}\geq r}\norm{\widehat{\lambda}^{\tau}g(x_i;\theta)}
=
O(n^{-3/4}) \times o(n^{1/3}) =o(1).
\]
Hence $\overline{W}_n(r)$ is well defined in probability. 

Recall that when  the convex hull condition holds, we have
\[
W_n(\theta) \geq \Omega_n(\widehat{\lambda}, \theta)
=
\sum_{i=1}^{n}\log[1+\widehat{\lambda}^{\tau}g(x_i;\theta)].
\]
Applying first order Taylor expansion as we did in \eqref{Taylor.ser}, we get
\[
\Omega_n(\widehat{\lambda}, \theta)
= 
\sum_{i=1}^{n}\widehat{\lambda}^{\tau}g(x_i;\theta) - (1/2) \epsilon_n(\theta),
\]
with
\[
\epsilon_n(\theta)
= \widehat{\lambda}^{\tau}(\theta)
\sum_{i=1}^{n}\frac{g(x_i;\theta)g^{\tau}(x_i;\theta)}{(1+\xi(\theta) g(x_i;\theta))^2}
\widehat{\lambda}(\theta),
\]
and $\xi(\theta)$ being a vector between {\bf 0} and $\widehat{\lambda}(\theta)$.
 Hence
\[
\overline{W}_n(r)
\geq 
\inf_{\norm{\theta}\geq r}\Omega_n(\widehat{\lambda},\theta)
\geq 
\inf_{\norm{\theta}\geq r}\Big[\sum_{i=1}^{n}\widehat{\lambda}^{\tau}g(x_i;\theta)\Big]
- 
\sup_{\norm{\theta}\geq r}\epsilon_n(\theta).
\]
We have
\[
\inf_{\norm{\theta}\geq r}\Big[ \sum_{i=1}^{n}\widehat{\lambda}g(x_i;\theta)\Big]
\geq
n^{1/4}\inf_{\norm{\theta}\geq r}\norm{\bar g_n(\theta)}^2
\geq 
n^{1/4}\{\epsilon + o(1)\}.
\]
Since $\norm{\xi(\theta)} \leq \norm{\widehat{\lambda}(\theta)}$,
we have 
\[
\sup_{\norm{\theta}\geq r}\max_{1\leq i\leq n}\norm{\xi(\theta) g(x_i;\theta)}
\leq 
\sup_{\norm{\theta}\geq r}\norm{\xi(\theta)}
 \max_{1\leq i\leq n}\sup_{\norm{\theta}\geq r}\norm{g(x_i;\theta)}=o(1).
\]
With this, we find
\begin{align*}
\sup_{\norm{\theta}\geq r}\epsilon_n(\theta) 
&\leq  
\{1 + o(1)\}\sup_{\norm{\theta}\geq r}[\widehat{\lambda}^{\tau} 
	\sum_{i=1}^{n}g(x_i;\theta) g^{\tau}(x_i; \theta)\widehat{\lambda}] \\
&\leq  
\{1 + o(1)\} n^{-3/2} \sup_{\norm{\theta}\geq r} \norm{\bar g_n(\theta)}^2 \times
	\sup_{\norm{\theta}\geq r} \sum_{i=1}^{n}\norm{g(x_i;\theta)}^2 \\
& =
O(n^{-3/2})\sup_{\norm{\theta}\geq r}\sum_{i=1}^{n}\norm{g(x_i;\theta)}^2\\
& = 
O(n^{-1/2})
\end{align*}
by condition C5 (i).
Therefore, $\overline{W}_n(r) \to \infty$ at a rate of at least $n^{1/4}$ almost surely.

When the convex hull condition is violated, 
$W_n(\theta)=-\infty$ so the conclusion holds automatically. 
This completes the proof.
\end{appendix}

\bibliographystyle{imsart-nameyear} 
% Style BST file (imsart-number.bst or imsart-nameyear.bst)
\bibliography{global_consistency}       % Bibliography file (usually '*.bib')
\nocite{*}
%% or include bibliography directly:
% \begin{thebibliography}{}
% \bibitem{b1}
% \end{thebibliography}

\end{document}